\documentclass[12pt,twoside]{amsart}
\usepackage{amsmath, amsthm, amscd, amsfonts, amssymb, graphicx}
\usepackage{enumerate}
\usepackage[colorlinks=true,
linkcolor=blue,
urlcolor=black,
citecolor=red]{hyperref}
\usepackage{mathrsfs}
\addtolength{\topmargin}{-1.5cm}
\linespread {1.3}
\textwidth 17cm
\textheight 23cm
\addtolength{\hoffset}{-0.3cm}
\oddsidemargin 0cm
\evensidemargin 0cm
\setcounter{page}{1}
\newtheorem{theorem}{Theorem}[section]
\newtheorem{lemma}{Lemma}[section]
\newtheorem{remark}{Remark}[section]

\newtheorem{corollary}{Corollary}[section]
\newtheorem{example}{Example}[section]
\newtheorem{proposition}{Proposition}[section]
\numberwithin{equation}{section}

\begin{document}
	
\title{On a binary operation for positive operators}
\author{Shigeru Furuichi, Hamid Reza Moradi, Cristian Conde, and Mohammad Sababheh}
\subjclass[2020]{Primary 47A63, 47A64, Secondary 47A30, 15A45.}
\keywords{Operator geometric mean, Tsallis relative operator entropy, unitary invariant norm}

\begin{abstract}
M. Lin defined a binary operation for two positive semi-definite matrices in studying certain determinantal inequalities that arise from diffusion tensor imaging. This operation enjoys some interesting properties similar to the operator geometric mean.
We study this operation further and present numerous properties emphasizing the relationship with the operator geometric mean. In the end, we present an application toward Tsallis relative operator entropy.
\end{abstract}
\maketitle
\pagestyle{myheadings}
\markboth{\centerline {}}
{\centerline {}}
\bigskip
\bigskip
\section{Introduction and preliminaries}
Let $\mathcal{B}(\mathcal{H})$ denote the $C^*$--algebra of all bounded linear operators on a  Hilbert space $\mathcal{H}$, with zero element $O$.

An axiomatic approach for connections and means of positive operators in $\mathcal{B}(\mathcal{H})$ was given in detail in \cite{kubo}. However, some partial discussion was known earlier for the operator harmonic and geometric means, as found in \cite{nishio,pusz}, respectively. In this context, an operator $T\in\mathcal{B}(\mathcal{H})$ is said to be positive if, for all nonzero $x\in\mathcal{H}$, one has $\left<Tx,x\right>> 0$. Such an operator will be denoted by $T>O$. On the other hand, we will write $T\geq O$ if $\left<Tx,x\right>\geq 0$ for all $x\in\mathcal{H}$. The set of all positive operators in $\mathcal{B}(\mathcal{H})$ is denoted by $\mathcal{B}(\mathcal{H})^+$.

Of particular interest, the operator geometric mean has received renowned attention in the literature. In what follows, we recall the definition of the operator geometric mean, with some of its most substantial properties. Let $A, B\in \mathcal B\left( \mathcal H \right)$ be  positive operators. The operator geometric mean $A\sharp B$ is defined by
\[A\sharp B={{A}^{\frac{1}{2}}}{{\left( {{A}^{-\frac{1}{2}}}B{{A}^{-\frac{1}{2}}} \right)}^{\frac{1}{2}}}{{A}^{\frac{1}{2}}}.\]
Although the operator geometric mean $A\sharp B$ can be defined for $A, B\ge O$ by the limit argument such as $A_{\varepsilon}:=A+\varepsilon I$ with $\varepsilon \to 0$, we consider the case $A, B>O$ for the simplicity.
The operator geometric mean enjoys the following basic properties for $A, B>O$. These properties can be found in \cite{kubo}, except for ($\bf{{P}_{3}}$), which can be found in \cite[Proposition 3.3.4]{H2010}.
\begin{itemize}
\item[($\bf{{P}_{1}}$)] $A\sharp B=B\sharp A$;

\medskip

\item[($\bf{{P}_{2}}$)] ${{\left( A\sharp B \right)}^{-1}}={{A}^{-1}}\sharp {{B}^{-1}}$;

\medskip

\item[($\bf{{P}_{3}}$)] $A\sharp B=\max \left\{ X\ge O;\text{ }\left[ \begin{matrix}
   A & X  \\
   X & B  \\
\end{matrix} \right]\ge O \right\}$;

\medskip

\item[($\bf{{P}_{4}}$)] ${{X}^{*}}\left( A\sharp B \right)X\le \left({{X}^{*}}AX\right)\sharp\left({{X}^{*}}BX\right)$ for all $X\in \mathcal B(\mathcal H)$.
\end{itemize}


We refer the reader to \cite{2, H2010}  as a list of references that treated the operator geometric mean and other means, like arithmetic mean $A\nabla B=\dfrac{A+B}{2}$ and harmonic mean $A!B=\left( A^{-1}\nabla B^{-1} \right)^{-1}$, with a detailed discussion of their properties and possible relations.

While studying determinantal inequalities that arise from diffusion tensor imaging, Lin  \cite{1} defined a binary operation on $\mathcal{B}(\mathcal{H})^+\times \mathcal{B}(\mathcal{H})^+ \longrightarrow \mathcal{B}(\mathcal{H})^+$, as follows:
\begin{equation}\label{11}
(A,B) \longrightarrow {{A}^{\frac{1}{2}}}{{\left( {{B}^{\frac{1}{2}}}{{A}^{-1}}{{B}^{\frac{1}{2}}} \right)}^{\frac{1}{2}}}{{A}^{\frac{1}{2}}}=:A\natural B.
\end{equation}
It is immediate that when $A$ and $B$ commute, one has $A\sharp B=A\natural B=A^{\frac{1}{2}}B^{\frac{1}{2}}.$ Also, it can be easily seen that
\begin{equation}\label{1}
\left( {{A}^{\frac{1}{2}}}{{B}^{\frac{1}{2}}} \right)X{{\left( {{A}^{\frac{1}{2}}}{{B}^{\frac{1}{2}}} \right)}^{*}}= A\natural B,\quad {\rm where}\quad X={{\left( A\sharp B \right)}^{-1}}
\end{equation}
by ($\bf{{P}_{1}}$) and ($\bf{{P}_{2}}$).

Our goal in this paper is to discuss further properties of the connection $\natural$ and explore possible relations with $\sharp.$ In our discussion, the following lemmas will be needed.

\begin{lemma}\label{3}
\cite[Theorem 1.3.3]{2} Let $G,H\in \mathcal B\left( \mathcal H \right)$ be positive operators. Then
\[\left[ \begin{matrix}
   G & K  \\
   {{K}^{*}} & H  \\
\end{matrix} \right]\ge O\Leftrightarrow G\ge K{{H}^{-1}}{{K}^{*}}.\]
\end{lemma}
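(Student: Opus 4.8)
The plan is to prove the equivalence by a Schur complement (congruence) argument that block-diagonalizes the $2\times2$ operator matrix. First I would record the factorization identity
\[
\begin{bmatrix} G & K \\ K^* & H \end{bmatrix}
= \begin{bmatrix} I & KH^{-1} \\ O & I \end{bmatrix}
\begin{bmatrix} G - KH^{-1}K^* & O \\ O & H \end{bmatrix}
\begin{bmatrix} I & O \\ H^{-1}K^* & I \end{bmatrix},
\]
which is checked by a direct multiplication, using that $H>O$ is invertible and $H=H^*$. Note that the third factor $S:=\begin{bmatrix} I & O \\ H^{-1}K^* & I \end{bmatrix}$ is invertible, with inverse $\begin{bmatrix} I & O \\ -H^{-1}K^* & I \end{bmatrix}$, and the first factor is exactly $S^*$.

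Next I would invoke the elementary fact that congruence by an invertible operator preserves positivity: for invertible $S$ and self-adjoint $T$ on $\mathcal H\oplus\mathcal H$, one has $T\ge O$ if and only if $S^*TS\ge O$ (the forward implication is immediate from $\langle S^*TS\,\xi,\xi\rangle=\langle T(S\xi),S\xi\rangle$, and the reverse follows by applying the forward implication to $S^{-1}$). Applying this with the $S$ above reduces positivity of the left-hand block matrix to positivity of $\operatorname{diag}\bigl(G-KH^{-1}K^*,\,H\bigr)$.

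Finally, a self-adjoint block-diagonal operator matrix is positive precisely when each diagonal block is positive; since $H>O$ holds by hypothesis, this is equivalent to $G-KH^{-1}K^*\ge O$, i.e. $G\ge KH^{-1}K^*$, which settles both implications simultaneously. The argument is entirely routine; the only point requiring a word of care is the legitimacy of $H^{-1}$ (as a bounded operator) in the statement and in the factors, which is guaranteed because ``positive'' in this paper means strictly positive. As an alternative one could argue directly, writing out $\bigl\langle \mathcal{T}(x\oplus y),\,x\oplus y\bigr\rangle\ge0$ for all $x,y\in\mathcal H$ and minimizing this quadratic expression in $y$ for fixed $x$, the minimizer being $y=-H^{-1}K^*x$ and the minimum value $\langle (G-KH^{-1}K^*)x,x\rangle$; I would mention this but carry out the congruence version, since it is cleaner and makes the equivalence transparent. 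I do not anticipate a genuine obstacle here — this is a standard lemma being recalled for later use.
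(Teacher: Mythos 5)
Your proof is correct. The paper gives no proof of this lemma at all --- it is simply quoted from Bhatia's \emph{Positive Definite Matrices} --- and your congruence factorization
\[
\begin{bmatrix} G & K \\ K^* & H \end{bmatrix}
= S^*\begin{bmatrix} G-KH^{-1}K^* & O \\ O & H \end{bmatrix}S,
\qquad S=\begin{bmatrix} I & O \\ H^{-1}K^* & I \end{bmatrix},
\]
is exactly the standard Schur-complement argument behind the cited result, so your write-up is a complete, self-contained substitute for the citation. The only caveat worth noting is that on an infinite-dimensional $\mathcal H$, positivity in the sense $\langle Hx,x\rangle>0$ for all nonzero $x$ does not by itself make $H^{-1}$ a bounded operator; but since the statement of the lemma already involves $H^{-1}$, this implicit invertibility assumption is inherited from the statement and is not a defect of your argument.
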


\begin{lemma}\cite[Lemma 1]{x2}\label{lem_ned}
 Let $G,H\in \mathcal B\left( \mathcal H \right)$ be positive operators. Then
 \[\left[ \begin{matrix}
   G & K^*  \\
   {{K}} & H  \\
\end{matrix} \right]\ge O\Leftrightarrow |\left<Kx,y\right>|^2\leq \left<Gx,x\right>\left<Hy,y\right>, x,y\in\mathcal{H}.\]
\end{lemma}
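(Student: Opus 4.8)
The plan is to unravel the block positivity into a single scalar inequality and then recognize the latter as a Cauchy--Schwarz type estimate. Let $M$ be the $2\times 2$ operator matrix appearing in the statement, viewed as an operator on $\mathcal H\oplus\mathcal H$. For $\xi=x\oplus y\in\mathcal H\oplus\mathcal H$ a direct computation gives
\begin{equation}\label{eq:scalarK}
\langle M\xi,\xi\rangle=\langle Gx,x\rangle+\langle K^{*}y,x\rangle+\langle Kx,y\rangle+\langle Hy,y\rangle=\langle Gx,x\rangle+2\operatorname{Re}\langle Kx,y\rangle+\langle Hy,y\rangle,
\end{equation}
using $\langle K^{*}y,x\rangle=\overline{\langle Kx,y\rangle}$. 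Hence $M\ge O$ is equivalent to the right-hand side of \eqref{eq:scalarK} being nonnegative for all $x,y\in\mathcal H$.

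For the implication $(\Leftarrow)$ I would simply estimate the cross term. Assuming $|\langle Kx,y\rangle|^{2}\le\langle Gx,x\rangle\langle Hy,y\rangle$ for all $x,y$, the arithmetic--geometric mean inequality gives $2|\langle Kx,y\rangle|\le 2\sqrt{\langle Gx,x\rangle\langle Hy,y\rangle}\le\langle Gx,x\rangle+\langle Hy,y\rangle$, whence $\langle Gx,x\rangle+2\operatorname{Re}\langle Kx,y\rangle+\langle Hy,y\rangle\ge\langle Gx,x\rangle-2|\langle Kx,y\rangle|+\langle Hy,y\rangle\ge 0$; this is the nonnegativity of the right-hand side of \eqref{eq:scalarK}, hence $M\ge O$.

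For the implication $(\Rightarrow)$ I would fix $x,y$ and exploit the freedom to replace $y$ by $sy$ for a scalar $s\in\mathbb C$. If $\langle Kx,y\rangle=0$ there is nothing to prove, so assume $\langle Kx,y\rangle\ne 0$ and pick $s$ with $|s|=t\ge 0$ and $\arg s$ chosen so that $\langle Kx,sy\rangle=-t\,|\langle Kx,y\rangle|$; then nonnegativity of the right-hand side of \eqref{eq:scalarK} at $(x,sy)$ reads $\langle Hy,y\rangle\,t^{2}-2|\langle Kx,y\rangle|\,t+\langle Gx,x\rangle\ge 0$ for every $t\ge 0$. If $\langle Hy,y\rangle>0$, substituting $t=|\langle Kx,y\rangle|/\langle Hy,y\rangle$ yields $|\langle Kx,y\rangle|^{2}\le\langle Gx,x\rangle\langle Hy,y\rangle$; if $\langle Hy,y\rangle=0$, letting $t\to\infty$ forces $\langle Kx,y\rangle=0$, contrary to assumption, so this case cannot occur.

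The argument is elementary, so I do not expect a genuine obstacle; the only points demanding a little care are the choice of the phase of $s$ in the scaling step (which depends on the convention adopted for the (anti)linearity of $\langle\cdot,\cdot\rangle$) and the separate treatment of the degenerate case $\langle Hy,y\rangle=0$. As a variant, when $G$ and $H$ are invertible one can bypass the quadratic estimate entirely: Lemma \ref{3} gives $M\ge O\iff G\ge K^{*}H^{-1}K$, and combining this with the identity $\langle H^{-1}Kx,Kx\rangle=\sup_{\langle Hy,y\rangle>0}|\langle Kx,y\rangle|^{2}/\langle Hy,y\rangle$ (itself an instance of the Cauchy--Schwarz inequality for the inner product $(u,v)\mapsto\langle Hu,v\rangle$) reproduces the stated equivalence.
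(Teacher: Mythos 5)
Your argument is correct and complete. Note, however, that the paper contains no proof of this lemma to compare against: it is quoted directly from Kittaneh \cite[Lemma 1]{x2} and used as a known tool. Your route is the standard self-contained one and it holds up: the computation $\left\langle M(x\oplus y),x\oplus y\right\rangle=\left\langle Gx,x\right\rangle+2\operatorname{Re}\left\langle Kx,y\right\rangle+\left\langle Hy,y\right\rangle$ correctly reduces block positivity to a scalar inequality; the arithmetic--geometric mean estimate settles sufficiency; and for necessity the substitution $y\mapsto sy$, with the phase of $s$ chosen so that the cross term becomes $-t\left|\left\langle Kx,y\right\rangle\right|$ and with the optimal value $t=\left|\left\langle Kx,y\right\rangle\right|/\left\langle Hy,y\right\rangle$, gives exactly the Cauchy--Schwarz-type bound, while your separate treatment of the degenerate case $\left\langle Hy,y\right\rangle=0$ (letting $t\to\infty$) closes the only potential gap. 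Your closing variant via the Schur-complement criterion of Lemma \ref{3} combined with $\left\langle H^{-1}Kx,Kx\right\rangle=\sup_{\left\langle Hy,y\right\rangle>0}\left|\left\langle Kx,y\right\rangle\right|^{2}/\left\langle Hy,y\right\rangle$ is also valid, but it needs $H$ invertible, so the quadratic argument is the better match for the generality of the statement, which assumes only $G,H\ge O$.
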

We emphasize here that the operator matrix $\left[ \begin{matrix}
   G & K  \\
   {{K}^{*}} & H  \\
\end{matrix} \right]$ is an operator in $\mathcal{B}(\mathcal{H}\oplus\mathcal{H})$.

When $\mathcal{H}$ is finite dimensional, we identify $\mathcal{B}(\mathcal{H})$ with the algebra $\mathcal{M}_n$ of all $n\times n$ complex matrices. When $X\in\mathcal{M}_n$, the notation $s_j(X)$ will denote the $j$-th singular value of $X$ when arranged in decreasing order, counting multiplicities. The following result concerning the singular values will be needed in the sequel.
\begin{lemma}\label{5}
\cite[Theorem 2.1]{3} Let $A,B,C\in \mathcal{M}_n$ be such that 	$\left[ \begin{matrix}
   A & {{C}^{*}}  \\
   C & B  \\
\end{matrix} \right]\ge O$. Then ${{s}_{j}}\left( C \right)\le {{s}_{j}}\left( A\oplus B \right)$ for $j=1,2,\ldots ,n$.
\end{lemma}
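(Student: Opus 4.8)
The plan is to deduce this singular-value inequality from a single operator inequality between Hermitian matrices of size $2n$, and then to invoke Weyl's monotonicity principle for eigenvalues. Throughout write $M=\left[ \begin{matrix} A & C^* \\ C & B \end{matrix} \right]\ge O$.

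First I would extract two consequences of $M\ge O$. Since the diagonal blocks of a positive semidefinite matrix are positive semidefinite, $A\ge O$ and $B\ge O$; hence $A\oplus B\ge O$, and in particular its singular values coincide with its eigenvalues. Next, conjugating $M$ by the unitary $U=I_n\oplus(-I_n)$ preserves positivity, so
\[
UMU=\left[ \begin{matrix} A & -C^* \\ -C & B \end{matrix} \right]\ge O,\qquad\text{i.e.}\qquad \widetilde C:=\left[ \begin{matrix} 0 & C^* \\ C & 0 \end{matrix} \right]\le A\oplus B.
\]

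Then I would recall the standard spectral picture of the Hermitian dilation $\widetilde C$: starting from a singular value decomposition $C=V\Sigma W^*$, the matrix $\widetilde C$ is unitarily equivalent to $\left[ \begin{matrix} 0 & \Sigma \\ \Sigma & 0 \end{matrix} \right]$, and a permutation similarity splits the latter into $2\times 2$ blocks $\left[ \begin{matrix} 0 & s_j(C) \\ s_j(C) & 0 \end{matrix} \right]$. Thus the eigenvalues of $\widetilde C$, counted with multiplicity, are $\pm s_1(C),\dots,\pm s_n(C)$, and since every $s_j(C)\ge 0$, the $j$-th largest eigenvalue of $\widetilde C$ equals $s_j(C)$ for $j=1,\dots,n$.

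Finally I would apply Weyl's monotonicity: from $\widetilde C\le A\oplus B$ it follows that the $j$-th largest eigenvalue of $\widetilde C$ is at most the $j$-th largest eigenvalue of $A\oplus B$ for every $j$. Evaluating this for $j=1,\dots,n$ yields $s_j(C)\le s_j(A\oplus B)$, which is the assertion. The argument is short and I do not expect a genuine obstacle; the only delicate point is the eigenvalue bookkeeping — one should note that the conclusion is stated (and is meaningful) precisely for $j\le n$, because the remaining $n$ eigenvalues of $\widetilde C$ are non-positive while $A\oplus B$ has $2n$ non-negative ones, and $C\in\mathcal M_n$ carries only $n$ singular values. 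A second route would pass through a block-pinching majorization, but the two-term comparison above is cleaner and relies on nothing beyond Weyl's inequality.
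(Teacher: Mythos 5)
Your argument is correct, but note that the paper itself offers no proof of this lemma: it is quoted as \cite[Theorem 2.1]{3} (Audeh--Kittaneh), so there is no internal argument to compare against. What you give is a complete, elementary substitute, and each step checks out: conjugating $\left[\begin{smallmatrix} A & C^{*}\\ C & B\end{smallmatrix}\right]\ge O$ by the unitary $I_n\oplus(-I_n)$ yields $\left[\begin{smallmatrix} A & -C^{*}\\ -C & B\end{smallmatrix}\right]\ge O$, i.e.\ the Hermitian dilation $\widetilde C=\left[\begin{smallmatrix} 0 & C^{*}\\ C & 0\end{smallmatrix}\right]$ satisfies $\widetilde C\le A\oplus B$; the spectrum of $\widetilde C$ is $\{\pm s_j(C)\}$, so its $j$-th largest eigenvalue equals $s_j(C)$ for $j\le n$; and Weyl's monotonicity principle together with $A\oplus B\ge O$ (so that eigenvalues and singular values of $A\oplus B$ coincide) gives $s_j(C)\le s_j(A\oplus B)$. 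The cited source establishes the inequality in the more general setting of compact operators, essentially through the contraction factorization of the off-diagonal block of a positive $2\times 2$ block operator (the same device the paper invokes later, via \cite[Proposition 1.3.2]{2}, in the proof of Corollary \ref{cor3.1}); your dilation-plus-Weyl route is shorter, stays entirely within finite-dimensional linear algebra, and is fully sufficient for the use made of the lemma in Theorem \ref{thm_ned}, which is stated for matrices. Your closing remark about the bookkeeping is also apt: the bound is only claimed for $j\le n$, exactly the range in which $\lambda_j(\widetilde C)=s_j(C)$.
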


\section{Main Result}
In this section, we firstly study the properties  ($\bf{{P}_{1}}$)-- ($\bf{{P}_{4}}$) for a binary operation $\natural$. We also present our results, which treat particularly $\natural.$ However, some byproducts will be shown for $\sharp.$ 

In contrast to $\sharp$,  $A\natural B$ defined in \eqref{11} does not satisfy ($\bf{{P}_{1}}$). Indeed, the following example gives a counter-example for ($\bf{{P}_{1}}$). 
\begin{example}\label{n_ex2.1}
Take
$$
A = \left( {\begin{array}{*{20}{c}}
2&1\\
1&2
\end{array}} \right),\quad B = \left( {\begin{array}{*{20}{c}}
2&{ - 2}\\
{ - 2}&5
\end{array}} \right).
$$
Then, numerical computations show that
$$
A\natural B \simeq \left( {\begin{array}{*{20}{c}}
{1.49139}&{ - 0.112297}\\
{ - 0.112297}&{2.85322}
\end{array}} \right),\quad B\natural A \simeq \left( {\begin{array}{*{20}{c}}
{1.48473}&{ - 0.139742}\\
{ - 0.139742}&{2.87067}
\end{array}} \right).
$$
\end{example}
An interesting characterization of the operator geometric mean $\sharp$ is that being the unique positive solution of the Riccati equation $XA^{-1}X=B$. This criterion immediately implies that  $A\sharp B$ satisfies the property ($\bf{{P}_{1}}$).

Another major difference between $\sharp$ and $\natural$ is explained next. Recall that the arithmetic mean's significance is reflected by the fact that it is the largest among symmetric means; see \cite{kubo}. Thus, $A\sharp B\leq A\nabla B$. Calculating the eigenvalues of  $\dfrac{A+B}{2}-A\natural B$ for the $2\times 2$ matrices given in Example \ref{n_ex2.1}, we find that the eigenvalues are approximately $0.320855$ and $-0.0212872$. So the inequality $A\natural B \le A\nabla B$ does not generally hold; adding another difference with $A\sharp B.$

Although $\natural$ does not satisfy ($\bf{{P}_{1}}$), it can be easily shown that it satisfies ($\bf{{P}_{2}}$). Indeed,
$$
\left(A^{-1}\natural B^{-1}\right)^{-1}=\left(A^{-\frac{1}{2}}\left(B^{-\frac{1}{2}}AB^{-\frac{1}{2}}\right)^{\frac{1}{2}}A^{-\frac{1}{2}}\right)^{-1}=A^{\frac{1}{2}}\left(B^{-\frac{1}{2}}AB^{-\frac{1}{2}}\right)^{-\frac{1}{2}}A^{\frac{1}{2}}=A\natural B.
$$
This is stated next.

\begin{proposition}\label{20}
Let $A, B\in \mathcal B\left( \mathcal H \right)$ be  positive operators. Then
\[{{\left( A\natural B \right)}^{-1}}={{A}^{-1}}\natural {{B}^{-1}}.\]
\end{proposition}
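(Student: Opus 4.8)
The plan is to verify the identity by a short, direct computation with the functional calculus, in the spirit of the display immediately preceding the statement, but with the steps laid out explicitly. Recall that in the convention fixed in the preliminaries a ``positive'' operator means $T>O$, hence $A$ and $B$ are invertible; consequently $A^{-1}\natural B^{-1}$ is well defined and every operator occurring below is positive and invertible.

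First I would unwind the definition \eqref{11} at the pair $(A^{-1},B^{-1})$, using $\left(A^{-1}\right)^{1/2}=A^{-1/2}$ and $\left(A^{-1}\right)^{-1}=A$ (both from the functional calculus applied to the positive invertible operator $A$), to get
\[
A^{-1}\natural B^{-1}=A^{-\frac{1}{2}}\left(B^{-\frac{1}{2}}AB^{-\frac{1}{2}}\right)^{\frac{1}{2}}A^{-\frac{1}{2}} .
\]
Taking the inverse of the right-hand side and using that inversion reverses products together with $\left(A^{-1/2}\right)^{-1}=A^{1/2}$ yields
\[
\left(A^{-1}\natural B^{-1}\right)^{-1}=A^{\frac{1}{2}}\left(B^{-\frac{1}{2}}AB^{-\frac{1}{2}}\right)^{-\frac{1}{2}}A^{\frac{1}{2}} .
\]
It then remains to recognise the middle factor: since $\left(B^{-\frac{1}{2}}AB^{-\frac{1}{2}}\right)^{-1}=B^{\frac{1}{2}}A^{-1}B^{\frac{1}{2}}$ and since $T\mapsto T^{1/2}$ and $T\mapsto T^{-1}$ commute on $\mathcal{B}(\mathcal{H})^+$ (that is, $\left(T^{-1}\right)^{1/2}=\left(T^{1/2}\right)^{-1}$), applying this with $T=B^{-\frac{1}{2}}AB^{-\frac{1}{2}}$ gives $\left(B^{-\frac{1}{2}}AB^{-\frac{1}{2}}\right)^{-\frac{1}{2}}=\left(B^{\frac{1}{2}}A^{-1}B^{\frac{1}{2}}\right)^{\frac{1}{2}}$. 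Substituting back, the right-hand side becomes $A^{\frac{1}{2}}\left(B^{\frac{1}{2}}A^{-1}B^{\frac{1}{2}}\right)^{\frac{1}{2}}A^{\frac{1}{2}}=A\natural B$, and inverting both sides gives the claim.

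There is no real obstacle here: the argument is pure bookkeeping with exponents, the only structural fact being that the square root and inversion commute on positive invertible operators, which is what lets the inner square root slide past the inversion. If one preferred a more conceptual phrasing, one could note that $\natural$ is the composition of the map $B\mapsto B^{1/2}A^{-1}B^{1/2}$, a square root, and the congruence $Z\mapsto A^{1/2}ZA^{1/2}$, and track how each of the three steps transforms under $A\mapsto A^{-1}$ and $B\mapsto B^{-1}$; but the computation above is the shortest route.
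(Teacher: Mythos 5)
Your argument is correct and is essentially the computation the paper itself gives (in the display immediately preceding the proposition): unwind the definition at $(A^{-1},B^{-1})$, invert, and use $\left(B^{-\frac{1}{2}}AB^{-\frac{1}{2}}\right)^{-\frac{1}{2}}=\left(B^{\frac{1}{2}}A^{-1}B^{\frac{1}{2}}\right)^{\frac{1}{2}}$ to recognise $A\natural B$. You merely make explicit the commuting of the square root with inversion, which the paper leaves implicit.
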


The first result presents two identities for $\natural,$ in terms of $\sharp.$
\begin{lemma}\label{24}
Let $A, B\in \mathcal B\left( \mathcal H \right)$ be positive operators. Then
\[A\natural B= A\sharp{{\left| {{A}^{-\frac{1}{2}}}{{B}^{\frac{1}{2}}}{{A}^{\frac{1}{2}}} \right|}^{2}}=A\sharp{{\left| {{A}^{\frac{1}{2}}}{{B}^{-\frac{1}{2}}}{{A}^{-\frac{1}{2}}} \right|}^{-2}}.\]
\end{lemma}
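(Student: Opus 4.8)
The plan is to verify both identities by direct substitution into the definitions of $\sharp$ and $\natural$, reducing each side to a single ``core'' positive operator. Set $X:={{A}^{-\frac12}}{{B}^{\frac12}}{{A}^{\frac12}}$. Since ${{A}^{\frac12}}$ and ${{B}^{\frac12}}$ are self-adjoint, $X^{*}={{A}^{\frac12}}{{B}^{\frac12}}{{A}^{-\frac12}}$, and hence
\[
{{\left| X \right|}^{2}}=X^{*}X={{A}^{\frac12}}{{B}^{\frac12}}{{A}^{-1}}{{B}^{\frac12}}{{A}^{\frac12}}.
\]
Putting $C:={{\left| X \right|}^{2}}$ into $A\sharp C={{A}^{\frac12}}{{\left( {{A}^{-\frac12}}C{{A}^{-\frac12}} \right)}^{\frac12}}{{A}^{\frac12}}$, the outer factors ${{A}^{\pm\frac12}}$ absorb the matching factors of $C$, leaving ${{A}^{-\frac12}}C{{A}^{-\frac12}}={{B}^{\frac12}}{{A}^{-1}}{{B}^{\frac12}}$; thus $A\sharp C={{A}^{\frac12}}{{\left( {{B}^{\frac12}}{{A}^{-1}}{{B}^{\frac12}} \right)}^{\frac12}}{{A}^{\frac12}}$, which is exactly $A\natural B$ by \eqref{11}. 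This proves the first equality.

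For the second equality, put $Y:={{A}^{\frac12}}{{B}^{-\frac12}}{{A}^{-\frac12}}$. Again using self-adjointness, ${{\left| Y \right|}^{2}}=Y^{*}Y={{A}^{-\frac12}}{{B}^{-\frac12}}A{{B}^{-\frac12}}{{A}^{-\frac12}}$, and a one-line computation shows that this equals $C^{-1}$ (invert each factor and reverse the order). Hence ${{\left| Y \right|}^{-2}}=C$ and $A\sharp {{\left| Y \right|}^{-2}}=A\sharp C=A\natural B$, as desired. I would also record a second, more structural route: applying the already-proved first identity with $A,B$ replaced by ${{A}^{-1}},{{B}^{-1}}$ yields ${{A}^{-1}}\natural {{B}^{-1}}={{A}^{-1}}\sharp {{\left| Y \right|}^{2}}$; taking inverses and invoking Proposition \ref{20} on the left together with property ($\bf{{P}_{2}}$) of $\sharp$ on the right gives $A\natural B={{\left( {{A}^{-1}}\natural {{B}^{-1}} \right)}^{-1}}={{\left( {{A}^{-1}}\sharp {{\left| Y \right|}^{2}} \right)}^{-1}}=A\sharp {{\left| Y \right|}^{-2}}$.

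There is no genuine obstacle here: the whole argument is bookkeeping of adjoints and the cancellation of ${{A}^{\pm\frac12}}$ factors, plus the observation that ${{\left| {{A}^{-\frac12}}{{B}^{\frac12}}{{A}^{\frac12}} \right|}^{2}}$ and ${{\left| {{A}^{\frac12}}{{B}^{-\frac12}}{{A}^{-\frac12}} \right|}^{2}}$ are mutually inverse. Since $A,B>O$, all the fractional powers and inverses exist, so no limiting or continuity argument is required; the only mild care is to keep the (non-self-adjoint) operators $X$ and $Y$ distinct from their moduli.
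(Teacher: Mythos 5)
Your proof is correct and follows essentially the same route as the paper: a direct computation showing ${\left| A^{-\frac12}B^{\frac12}A^{\frac12} \right|}^{2}=A^{\frac12}B^{\frac12}A^{-1}B^{\frac12}A^{\frac12}$, cancellation of the $A^{\pm\frac12}$ factors inside the definition of $\sharp$ to recover $A\natural B$, and the observation that ${\left| A^{\frac12}B^{-\frac12}A^{-\frac12} \right|}^{-2}={\left| A^{-\frac12}B^{\frac12}A^{\frac12} \right|}^{2}$ for the second identity. The extra route via Proposition \ref{20} and ($\bf{P_2}$) is a valid but unnecessary supplement.
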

\begin{proof}
A direct calculation gives
$$
A\sharp{{\left| {{A}^{-\frac{1}{2}}}{{B}^{\frac{1}{2}}}{{A}^{\frac{1}{2}}} \right|}^{2}} 
= A^{\frac12}\left(A^{-\frac12}\left(A^{\frac12}B^{\frac12}A^{-1}B^{\frac12}A^{\frac12}\right)A^{-\frac12}\right)^{\frac12}A^{\frac12}
=A\natural B.
$$
Since ${{\left| {{A}^{\frac{1}{2}}}{{B}^{-\frac{1}{2}}}{{A}^{-\frac{1}{2}}} \right|}^{-2}}
={{\left| {{A}^{-\frac{1}{2}}}{{B}^{\frac{1}{2}}}{{A}^{\frac{1}{2}}} \right|}^{2}}$, the proof is completed.
\end{proof}

A consequence of Lemma \ref{24} can be stated next. The significance of this result has a Recatti-type equation whose solution is $A\natural B$, in a way similar to $A\sharp B.$ 
\begin{theorem}\label{6}
Let $A, B\in \mathcal B\left( \mathcal H \right)$ be  positive operators. Then, $A\natural B$ is the unique positive solution $\mathbb X$ of the equation
\begin{equation}\label{2}
\mathbb X{{A}^{-1}}\mathbb X={{\left| {{A}^{-\frac{1}{2}}}{{B}^{\frac{1}{2}}}{{A}^{\frac{1}{2}}} \right|}^{2}}.
\end{equation}
\end{theorem}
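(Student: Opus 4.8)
The plan is to reduce Theorem \ref{6} to the well-known Riccati characterization of the operator geometric mean, exploiting Lemma \ref{24}. Recall that for positive operators $A,C>O$, the equation $\mathbb X A^{-1}\mathbb X=C$ has a unique positive solution, namely $\mathbb X=A\sharp C$. So the entire statement follows by setting $C=\left|A^{-\frac12}B^{\frac12}A^{\frac12}\right|^{2}$, which is indeed a positive operator, and invoking Lemma \ref{24} to identify $A\sharp C=A\natural B$.

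First I would verify that $A\natural B$ is a solution of \eqref{2}. By Lemma \ref{24}, $A\natural B=A\sharp C$ with $C=\left|A^{-\frac12}B^{\frac12}A^{\frac12}\right|^{2}$. Since $A\sharp C=A^{\frac12}\left(A^{-\frac12}CA^{-\frac12}\right)^{\frac12}A^{\frac12}$, a direct computation gives
\[
(A\sharp C)A^{-1}(A\sharp C)=A^{\frac12}\left(A^{-\frac12}CA^{-\frac12}\right)^{\frac12}\left(A^{-\frac12}CA^{-\frac12}\right)^{\frac12}A^{\frac12}=A^{\frac12}\left(A^{-\frac12}CA^{-\frac12}\right)A^{\frac12}=C,
\]
so $\mathbb X=A\natural B$ satisfies \eqref{2}.

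For uniqueness, suppose $\mathbb X>O$ solves $\mathbb X A^{-1}\mathbb X=C$. Conjugating by $A^{-\frac12}$ and writing $Y:=A^{-\frac12}\mathbb X A^{-\frac12}>O$, the equation becomes $Y^{2}=A^{-\frac12}CA^{-\frac12}$. Since the positive operator $A^{-\frac12}CA^{-\frac12}$ has a unique positive square root, $Y=\left(A^{-\frac12}CA^{-\frac12}\right)^{\frac12}$ is determined, hence $\mathbb X=A^{\frac12}YA^{\frac12}$ is unique; and this forces $\mathbb X=A\sharp C=A\natural B$.

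This argument is essentially routine once Lemma \ref{24} is in hand; there is no real obstacle. The only point requiring a small amount of care is the reduction step $\mathbb X A^{-1}\mathbb X=C\iff Y^{2}=A^{-\frac12}CA^{-\frac12}$, which hinges on $A>O$ so that $A^{\pm\frac12}$ are bounded and invertible (this is exactly why the standing assumption $A,B>O$ rather than $A,B\ge O$ is convenient). Alternatively, one could bypass Lemma \ref{24} and argue directly that \eqref{2} is the Riccati equation $\mathbb X A^{-1}\mathbb X=C$ whose unique positive solution is $A\sharp C$, then separately observe $A\sharp C=A\natural B$ from the definition — but routing through Lemma \ref{24} keeps the proof shortest.
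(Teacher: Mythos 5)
Your proposal is correct and follows essentially the same route as the paper: both reduce \eqref{2} to the Riccati equation $\mathbb X A^{-1}\mathbb X=C$ with $C=\left|A^{-\frac12}B^{\frac12}A^{\frac12}\right|^{2}$, whose unique positive solution is $A\sharp C$, and then identify $A\sharp C=A\natural B$ via Lemma \ref{24}. The only difference is that you spell out the existence and uniqueness of the Riccati solution by conjugating with $A^{-\frac12}$, whereas the paper simply cites this as a known fact.
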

\begin{proof}
It is known that the Riccati equation $XA^{-1}X=B$ has a unique positive solution $X=A\sharp B$.
From Lemma \ref{24}, the equation ${\mathbb X}A^{-1}{\mathbb X}={{\left| {{A}^{-\frac{1}{2}}}{{B}^{\frac{1}{2}}}{{A}^{\frac{1}{2}}} \right|}^{2}}$ has a unique positive solution ${\mathbb X}=A\sharp{{\left| {{A}^{-\frac{1}{2}}}{{B}^{\frac{1}{2}}}{{A}^{\frac{1}{2}}} \right|}^{2}}=A\natural B$.
\end{proof}

By the use of Theorem \ref{6}, we have the following result corresponding to ($\bf{{P}_{3}}$).
\begin{proposition}\label{prop_P3}
For $A,B>O$, we have
 $A\natural B=\max \left\{ X\ge O;\text{ }\left[ \begin{matrix}
   A & X  \\
   X & {{\left| {{A}^{-\frac{1}{2}}}{{B}^{\frac{1}{2}}}{{A}^{\frac{1}{2}}} \right|}^{2}}  \\
\end{matrix} \right]\ge O \right\}$.
\end{proposition}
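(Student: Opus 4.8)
The plan is to reduce Proposition~\ref{prop_P3} to the known characterization ($\bf{{P}_{3}}$) of the operator geometric mean via the identity supplied by Lemma~\ref{24}, namely $A\natural B=A\sharp C$ where $C:=\left|A^{-\frac{1}{2}}B^{\frac{1}{2}}A^{\frac{1}{2}}\right|^{2}$. Note first that $C>O$, since it is the square of the invertible operator $\left|A^{-\frac{1}{2}}B^{\frac{1}{2}}A^{\frac{1}{2}}\right|$ (the factors $A^{\pm\frac12}$ and $B^{\frac12}$ are all invertible because $A,B>O$). Hence ($\bf{{P}_{3}}$) applies to the pair $(A,C)$ and gives
\[
A\sharp C=\max\left\{X\ge O;\ \left[\begin{matrix} A & X \\ X & C \end{matrix}\right]\ge O\right\}.
\]
Since $A\natural B=A\sharp C$ by Lemma~\ref{24}, substituting $C=\left|A^{-\frac{1}{2}}B^{\frac{1}{2}}A^{\frac{1}{2}}\right|^{2}$ into the right-hand side yields exactly the claimed formula.

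I would write this out in essentially one displayed chain: start from ($\bf{{P}_{3}}$) applied to $A$ and $C$, invoke Lemma~\ref{24} to rewrite $A\sharp C$ as $A\natural B$, and observe that the operator matrix appearing in ($\bf{{P}_{3}}$) is precisely the one in the statement once $C$ is spelled out. A brief remark justifying that the ``$\max$'' is attained (not merely a supremum) is already built into ($\bf{{P}_{3}}$) as stated, so nothing extra is needed there; one only needs the one-line observation that $C$ is positive definite so that ($\bf{{P}_{3}}$) is legitimately applicable. Alternatively, and perhaps more transparently, one can mirror the standard proof of ($\bf{{P}_{3}}$) directly: by Lemma~\ref{3}, $\left[\begin{smallmatrix} A & X \\ X & C\end{smallmatrix}\right]\ge O$ is equivalent to $A\ge XC^{-1}X$ (using $X=X^{*}$), and among positive $X$ satisfying $XC^{-1}X\le A$ the maximal one is the solution of $XC^{-1}X=A$, which by the Riccati characterization is $C\sharp A=A\sharp C=A\natural B$; here Theorem~\ref{6} (equivalently the Riccati equation for $\sharp$) supplies both existence and maximality.

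I do not anticipate a genuine obstacle: the content is entirely a translation of ($\bf{{P}_{3}}$) through Lemma~\ref{24}. The only points requiring a word of care are (i) confirming invertibility of $C$ so that ($\bf{{P}_{3}}$), which is stated for positive definite arguments, may be invoked — this is immediate from $A,B>O$; and (ii) if one takes the direct route via Lemma~\ref{3}, checking that the monotonicity step ``$X\le Y$ whenever $XC^{-1}X\le YC^{-1}Y$ for positive $X,Y$'' is applied correctly, which is the standard operator-monotonicity of $t\mapsto t^{-1}$ combined with the fact that $Z\mapsto C^{-\frac12}ZC^{-\frac12}$ preserves order; this is exactly the argument underlying ($\bf{{P}_{3}}$) itself, so citing ($\bf{{P}_{3}}$) for the pair $(A,C)$ is the cleanest presentation.
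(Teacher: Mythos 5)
Your primary argument is correct, and it is a genuinely different (and shorter) route than the paper's. You reduce the statement to property ($\bf{P_3}$) applied to the pair $\left(A,\,C\right)$ with $C=\left|A^{-\frac{1}{2}}B^{\frac{1}{2}}A^{\frac{1}{2}}\right|^{2}$, and then invoke Lemma \ref{24} to identify $A\sharp C$ with $A\natural B$; since $C>O$ when $A,B>O$, nothing more is needed, and this makes transparent that the proposition is just ($\bf{P_3}$) transported through Lemma \ref{24}. The paper does not quote ($\bf{P_3}$) here; instead it re-proves the maximality by hand: starting from the Riccati-type inequality $XA^{-1}X\le \left|A^{-\frac{1}{2}}B^{\frac{1}{2}}A^{\frac{1}{2}}\right|^{2}$ (the block positivity, via Lemma \ref{3}), it conjugates by $A^{-\frac12}$, applies the L\"owner--Heinz inequality to get $X\le A^{\frac12}\left(B^{\frac12}A^{-1}B^{\frac12}\right)^{\frac12}A^{\frac12}=A\natural B$, and uses Theorem \ref{6} for attainment. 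What each approach buys: yours is a one-line reduction that leans on the cited characterization of $\sharp$ (and on ($\bf{P_1}$) if you phrase the Riccati solution as $C\sharp A$), whereas the paper's proof is self-contained modulo L\"owner--Heinz and exhibits explicitly why the maximizer is $A\natural B$ in terms of the original data $A,B$. Your ``alternative'' route via Lemma \ref{3} and the Riccati equation is essentially the paper's argument, written with the inequality $XC^{-1}X\le A$ instead of $XA^{-1}X\le C$ (equivalent by the symmetry of block positivity); your one caveat is rightly placed there, namely that Theorem \ref{6} gives attainment while maximality still needs the L\"owner--Heinz monotonicity step, which you acknowledge.
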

\begin{proof}
Assume  we have the inequality $XA^{-1}X\le A^{1/2}B^{1/2}A^{-1}B^{1/2}A^{1/2}={{\left| {{A}^{-\frac{1}{2}}}{{B}^{\frac{1}{2}}}{{A}^{\frac{1}{2}}} \right|}^{2}}$. This is equivalent to $
A^{-1/2}XA^{-1}XA^{-1/2}\le B^{1/2}A^{-1}B^{1/2},
$
which is also equivalent to the inequality $\left(A^{-1/2}XA^{-1/2}\right)^2\le B^{1/2}A^{-1}B^{1/2}$.
By the L\"owner--Heinz inequality,this implies $A^{-1/2}XA^{-1/2}\le \left(B^{1/2}A^{-1}B^{1/2}\right)^{1/2}$ which is equivalent to the inequality $X\le A^{1/2}\left(B^{1/2}A^{-1}B^{1/2}\right)^{1/2}A^{1/2}=A\natural B$.
On the other hand, it is easy to see that the equality $XA^{-1}X=A^{1/2}B^{1/2}A^{-1}B^{1/2}A^{1/2}={{\left| {{A}^{-\frac{1}{2}}}{{B}^{\frac{1}{2}}}{{A}^{\frac{1}{2}}} \right|}^{2}}$ holds by Theorem \ref{6}.
\end{proof}

Next, we state the following example, which confutes  ($\bf{{P}_{4}}$) for $\natural$.
\begin{example}\label{ex_P4}
Take three positive definite matrices:
$$
A = \left( {\begin{array}{*{20}{c}}
2&1\\
1&2
\end{array}} \right),\quad B = \left( {\begin{array}{*{20}{c}}
2&{ - 2}\\
{ - 2}&5
\end{array}} \right),\quad C = \left( {\begin{array}{*{20}{c}}
3&{ - 1}\\
{ - 1}&3
\end{array}} \right).
$$
By the numerical computation, we obtain the eigenvalues of
$
(CAC)\natural (CBC)-C(A\natural B)C
$
to be   $1.96229$ and $-1.76226$, approximately. Therefore the inequality corresponding to  \rm {($\bf{{P}_{4}}$):}
$$
X^*(A\natural B)X \le \left(X^*AX\right)\natural  \left(X^*BX\right),\quad {\rm for \,\,all}\,\,\, X\in \mathcal B(\mathcal H)
$$
does not hold in general. For reference, the eigenvalues of $(CAC)\sharp (CBC)-C(A\sharp B)C$ are $0.0$ and $0.0$ by the numerical computation. It is trivial because we have $C(A\sigma B)C=(CAC)\sigma (CBC)$ for any operator connection $\sigma$ and $C>O$. See \cite[Proposition 3.1.3.]{H2010} for example.
\end{example}

Example \ref{ex_P4} shows that $A\natural B$ is not an operator connection in the sense of Kubo--Ando theory \cite{kubo}, since the transformer inequality does not hold. 

Another consequence of Lemma \ref{24} involves some upper and lower bounds of $A\natural B$, as follows.
\begin{corollary}\label{22}
Let $A, B\in \mathcal B\left( \mathcal H \right)$ be  positive operators.
\begin{itemize}
\item[(i)] If $A\le B$, then $A\le A\natural B\le {{\left| {{A}^{-\frac{1}{2}}}{{B}^{\frac{1}{2}}}{{A}^{\frac{1}{2}}} \right|}^{2}}$.
\item[(ii)] If $B\le A$, then ${{\left| {{A}^{-\frac{1}{2}}}{{B}^{\frac{1}{2}}}{{A}^{\frac{1}{2}}} \right|}^{2}}\le A\natural B\le A$.
\end{itemize}
\end{corollary}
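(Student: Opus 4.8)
The plan is to exploit Lemma~\ref{24}, which identifies $A\natural B$ with the operator geometric mean $A\sharp C$ where $C:=\left|A^{-1/2}B^{1/2}A^{1/2}\right|^2 = A^{1/2}B^{1/2}A^{-1}B^{1/2}A^{1/2}$. Once this is in hand, both parts of the corollary reduce to well-known monotonicity properties of $\sharp$ together with a comparison between $A$, $B$, and $C$. Specifically, I would use the elementary facts that for positive operators $P,Q$, one has $P\le Q\Rightarrow P = P\sharp P\le P\sharp Q\le Q\sharp Q = Q$ (monotonicity of $\sharp$ in each variable together with $P\sharp P=P$), which sandwiches $P\sharp Q$ between $P$ and $Q$. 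Applying this with $P=A$ and $Q=C$ gives $A\le A\sharp C\le C$ whenever $A\le C$, and $C\le A\sharp C\le A$ whenever $C\le A$; by Lemma~\ref{24} these read $A\le A\natural B\le C$ and $C\le A\natural B\le A$ respectively.

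So the remaining task is purely to show that $A\le B$ implies $A\le C$, and $B\le A$ implies $C\le A$. For part (i), suppose $A\le B$. Then $A^{-1/2}BA^{-1/2}\ge I$, hence $B^{1/2}A^{-1}B^{1/2} = \left(A^{-1/2}BA^{-1/2}\right)^{\ast}$-type manipulation — more precisely $B^{1/2}A^{-1}B^{1/2}\ge I$ (it is congruent to $A^{-1/2}BA^{-1/2}\ge I$ via $B^{1/2}A^{-1/2}$, but cleaner: $A\le B\iff A^{-1}\ge B^{-1}\iff B^{1/2}A^{-1}B^{1/2}\ge B^{1/2}B^{-1}B^{1/2}=I$). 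Congruence by $A^{1/2}$ then gives $C = A^{1/2}\left(B^{1/2}A^{-1}B^{1/2}\right)A^{1/2}\ge A^{1/2}IA^{1/2}=A$, which is exactly $A\le C$. Part (ii) is the mirror image: $B\le A\iff A^{-1}\le B^{-1}\iff B^{1/2}A^{-1}B^{1/2}\le I$, and congruence by $A^{1/2}$ yields $C\le A$.

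I would then simply invoke Lemma~\ref{24} to rewrite $A\sharp C$ as $A\natural B$ in both chains and conclude. I do not anticipate a genuine obstacle here; the only point requiring a small amount of care is making sure the congruence/antimonotonicity steps ($A\le B\iff A^{-1}\ge B^{-1}$ and $P\le Q\Rightarrow X^{\ast}PX\le X^{\ast}QX$) are applied in the right order, and that the "sandwich" property $P\wedge Q\le P\sharp Q\le P\vee Q$ (for comparable $P,Q$) is justified from monotonicity of the geometric mean and the idempotency $P\sharp P=P$. Both are standard and can be cited from \cite{kubo} or \cite{H2010}, so the write-up should be short.

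\begin{proof}
Write $C:=\left|A^{-\frac12}B^{\frac12}A^{\frac12}\right|^2 = A^{\frac12}B^{\frac12}A^{-1}B^{\frac12}A^{\frac12}$, so that $A\natural B = A\sharp C$ by Lemma~\ref{24}.

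(i) Suppose $A\le B$. Then $A^{-1}\ge B^{-1}$, and congruence by $B^{\frac12}$ gives $B^{\frac12}A^{-1}B^{\frac12}\ge B^{\frac12}B^{-1}B^{\frac12}=I$. Congruence by $A^{\frac12}$ then yields
\[
C=A^{\frac12}\left(B^{\frac12}A^{-1}B^{\frac12}\right)A^{\frac12}\ge A^{\frac12}A^{\frac12}=A .
\]
Hence $A\le C$. By monotonicity of $\sharp$ in its second variable and the identity $A\sharp A=A$, we obtain $A=A\sharp A\le A\sharp C\le C\sharp C=C$; that is, $A\le A\natural B\le C=\left|A^{-\frac12}B^{\frac12}A^{\frac12}\right|^2$.

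(ii) Suppose $B\le A$. Then $A^{-1}\le B^{-1}$, and congruence by $B^{\frac12}$ gives $B^{\frac12}A^{-1}B^{\frac12}\le I$, whence congruence by $A^{\frac12}$ yields $C\le A$. As in (i), monotonicity of $\sharp$ together with $A\sharp A=A$ and $C\sharp C=C$ gives $C=C\sharp C\le A\sharp C\le A\sharp A=A$, i.e. $\left|A^{-\frac12}B^{\frac12}A^{\frac12}\right|^2=C\le A\natural B\le A$.
\end{proof}
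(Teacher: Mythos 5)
Your proof is correct and follows essentially the same route as the paper: you invoke Lemma~\ref{24} to write $A\natural B=A\sharp\left|A^{-\frac12}B^{\frac12}A^{\frac12}\right|^2$, observe that $A\le B$ is equivalent to $A\le\left|A^{-\frac12}B^{\frac12}A^{\frac12}\right|^2$, and conclude by monotonicity of the geometric mean (your sandwich via $P\sharp P=P$ just spells out what the paper leaves implicit).
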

\begin{proof}
The equivalence $A\le B\Longleftrightarrow A\le \left| A^{-\frac12}B^{\frac12}A^{\frac12}\right|^2$, with Lemma \ref{24} and the monotonicity of the geometric mean imply (i). The second assertion in (ii) follows similarly.
\end{proof}

In the sequel, we adopt the notation
\[\kappa \left( S,T \right)=\left\| S \right\|\left\| T \right\|, \quad S,T\in\mathcal{B}(\mathcal{H}).\] Then more concrete comparisons can be obtained, as follows.
\begin{proposition}\label{9}
Let $A, B\in \mathcal B\left( \mathcal H \right)$ be  positive operators. Then
\[{{\left| {{A}^{-\frac{1}{2}}}{{B}^{\frac{1}{2}}}{{A}^{\frac{1}{2}}} \right|}^{2}}\le \kappa \left( {{A}^{-1}},B \right)A,\]
and
\[A\le \kappa \left( A,{{B}^{-1}} \right){{\left| {{A}^{\frac{1}{2}}}{{B}^{-\frac{1}{2}}}{{A}^{-\frac{1}{2}}} \right|}^{-2}}.\]
\end{proposition}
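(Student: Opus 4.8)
The plan is to reduce both inequalities to the elementary fact that a positive operator $T$ satisfies $T\le \|T\|\,I$, combined with the order‑reversing property of the inversion map $X\mapsto X^{-1}$ on $\mathcal B(\mathcal H)^+$. Note that since $A,B>O$, all the operators appearing (in particular $\left|A^{1/2}B^{-1/2}A^{-1/2}\right|$) are invertible, so these manipulations are legitimate.

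For the first inequality I would expand the modulus: $\left|A^{-1/2}B^{1/2}A^{1/2}\right|^2=A^{1/2}B^{1/2}A^{-1}B^{1/2}A^{1/2}$. From $A^{-1}\le \|A^{-1}\|\,I$, conjugating by $B^{1/2}$ gives $B^{1/2}A^{-1}B^{1/2}\le \|A^{-1}\|\,B\le \|A^{-1}\|\,\|B\|\,I$; conjugating once more by $A^{1/2}$ yields $\left|A^{-1/2}B^{1/2}A^{1/2}\right|^2\le \|A^{-1}\|\,\|B\|\,A=\kappa\!\left(A^{-1},B\right)A$, as claimed.

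For the second inequality I would invoke the first one with $A$ and $B$ replaced by $A^{-1}$ and $B^{-1}$, which gives $\left|A^{1/2}B^{-1/2}A^{-1/2}\right|^2\le \|A\|\,\|B^{-1}\|\,A^{-1}$. Both sides are positive and invertible, so applying the inversion map reverses the inequality and produces $A\le \|A\|\,\|B^{-1}\|\,\left|A^{1/2}B^{-1/2}A^{-1/2}\right|^{-2}=\kappa\!\left(A,B^{-1}\right)\left|A^{1/2}B^{-1/2}A^{-1/2}\right|^{-2}$.

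The computation is routine; the only points requiring a little care are the correct bookkeeping of the adjoint when expanding $|\cdot|^2$ and the reversal of the operator inequality under inversion in the last step. I do not expect any substantial obstacle here.
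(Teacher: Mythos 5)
Your proposal is correct and follows essentially the same route as the paper: the paper bounds the quadratic form $\left\langle \left|A^{-\frac{1}{2}}B^{\frac{1}{2}}A^{\frac{1}{2}}\right|^{2}x,x\right\rangle$ by $\|A^{-1}\|\,\|B\|\langle Ax,x\rangle$, which is precisely your argument $A^{-1}\le\|A^{-1}\|I$, $B\le\|B\|I$ expressed via congruences rather than inner products. The second inequality is obtained in both cases by the substitution $A\mapsto A^{-1}$, $B\mapsto B^{-1}$; you merely make explicit the final order-reversal under operator inversion, which the paper leaves implicit.
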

\begin{proof}
Let $x\in \mathcal H$ be a unit vector. Then 
	\[\begin{aligned}
 &  \left\langle {{\left| {{A}^{-\frac{1}{2}}}{{B}^{\frac{1}{2}}}{{A}^{\frac{1}{2}}} \right|}^{2}}x,x \right\rangle =\left\langle {{A}^{\frac{1}{2}}}{{B}^{\frac{1}{2}}}{{A}^{-1}}{{B}^{\frac{1}{2}}}{{A}^{\frac{1}{2}}}x,x \right\rangle   =\left\langle {{A}^{-1}}{{B}^{\frac{1}{2}}}{{A}^{\frac{1}{2}}}x,{{B}^{\frac{1}{2}}}{{A}^{\frac{1}{2}}}x \right\rangle\\ &  \le \left\| {{A}^{-1}} \right\|\left\| {{B}^{\frac{1}{2}}}{{A}^{\frac{1}{2}}}x \right\|^2  =\left\| {{A}^{-1}} \right\|\left\langle B{{A}^{\frac{1}{2}}}x,{{A}^{\frac{1}{2}}}x \right\rangle   \le \left\| {{A}^{-1}} \right\|\left\| B \right\|\left\langle Ax,x \right\rangle.   
\end{aligned}\]
Thus,
	\[\left\langle {{\left| {{A}^{-\frac{1}{2}}}{{B}^{\frac{1}{2}}}{{A}^{\frac{1}{2}}} \right|}^{2}}x,x \right\rangle \le \kappa \left( {{A}^{-1}},B \right)\left\langle Ax,x \right\rangle.\]
We deduce the first inequality since this is valid for an arbitrary $x\in\mathcal H$. The second inequality can be obtained from the first inequality by replacing $A$ with $A^{-1}$ and $B$ with $B^{-1}$.
\end{proof}

It follows from Corollary \ref{22} (i) and Proposition \ref{9} that
\[A\le A\natural B\le \left| A^{-\frac12}B^{\frac12}A^{\frac12}\right|^2\le \kappa \left( {{A}^{-1}},B \right)A\le  \kappa \left( {{A}^{-1}},B \right)B,\]
provided $A\le B$.

In fact, we have the following comparison without imposing any further ordering between $A$ and $B$.
\begin{corollary}\label{cor_2.2}
Let $A, B>O$. Then
\begin{equation}\label{cor_2.2_eq00}
  \frac{1}{\sqrt{\kappa(A,B^{-1})}}A\le A\natural B \le \sqrt{\kappa(A^{-1},B)} A.
\end{equation}
\end{corollary}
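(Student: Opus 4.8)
The plan is to read off both inequalities directly from Lemma \ref{24} and Proposition \ref{9}, using only two elementary facts about the geometric mean: its monotonicity in each variable (already invoked in the proof of Corollary \ref{22}) and the scalar identity $A\sharp(cA)=\sqrt{c}\,A$ for every $c>0$, which is immediate from the definition since $A\sharp(cA)=A^{\frac12}(cI)^{\frac12}A^{\frac12}=\sqrt{c}\,A$.

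For the right-hand inequality in \eqref{cor_2.2_eq00}, I would begin with the first identity of Lemma \ref{24}, namely $A\natural B=A\sharp\left|A^{-\frac12}B^{\frac12}A^{\frac12}\right|^{2}$, and substitute the first estimate of Proposition \ref{9}, $\left|A^{-\frac12}B^{\frac12}A^{\frac12}\right|^{2}\le\kappa(A^{-1},B)\,A$. Monotonicity of $\sharp$ then yields $A\natural B\le A\sharp\bigl(\kappa(A^{-1},B)\,A\bigr)=\sqrt{\kappa(A^{-1},B)}\,A$, which is exactly what is claimed.

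For the left-hand inequality, I would instead use the second identity of Lemma \ref{24}, $A\natural B=A\sharp\left|A^{\frac12}B^{-\frac12}A^{-\frac12}\right|^{-2}$, together with the second estimate of Proposition \ref{9} rewritten in the equivalent form $\frac{1}{\kappa(A,B^{-1})}A\le\left|A^{\frac12}B^{-\frac12}A^{-\frac12}\right|^{-2}$. Applying monotonicity of $\sharp$ once more gives $A\natural B\ge A\sharp\bigl(\tfrac{1}{\kappa(A,B^{-1})}A\bigr)=\frac{1}{\sqrt{\kappa(A,B^{-1})}}\,A$, completing the proof.

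I do not anticipate any genuine obstacle: this is a direct corollary, and no ordering between $A$ and $B$ enters at any point, which is precisely why \eqref{cor_2.2_eq00} holds in full generality. The only small point worth stating explicitly is that $\kappa(A^{-1},B)$ and $\kappa(A,B^{-1})$ are strictly positive scalars — this is guaranteed because $A,B>O$ are bounded below and hence invertible with bounded inverses — so that the scalar identity $A\sharp(cA)=\sqrt{c}\,A$ and the monotonicity steps apply verbatim.
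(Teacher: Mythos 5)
Your proof is correct and follows essentially the same route as the paper: the left-hand inequality is exactly the paper's argument (Lemma \ref{24}, the second estimate of Proposition \ref{9}, monotonicity of $\sharp$, and the identity $A\sharp(cA)=\sqrt{c}\,A$). The only cosmetic difference is that you prove the right-hand inequality directly from the first identity of Lemma \ref{24} and the first estimate of Proposition \ref{9}, whereas the paper obtains it from the left-hand inequality by the substitution $A\mapsto A^{-1}$, $B\mapsto B^{-1}$ (together with Proposition \ref{20}); both are valid and use the same ingredients.
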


\begin{proof}
We have
\[\begin{aligned}
   \frac{1}{\sqrt{\kappa \left( A,{{B}^{-1}} \right)}}A&=A\sharp\frac{1}{\kappa \left( A,{{B}^{-1}} \right)}A \\ 
 & \le A\sharp{{\left| {{A}^{\frac{1}{2}}}{{B}^{-\frac{1}{2}}}{{A}^{-\frac{1}{2}}} \right|}^{-2}} \quad \text{(by the inequality in Proposition \ref{9})}\\ 
 & = A\natural B \quad \text{(by the identity in Lemma \ref{24})}. 
\end{aligned}\]
Thus, we have the first inequality of \eqref{cor_2.2_eq00}. Replacing $A$ and $B$ with $A^{-1}$ and $B^{-1}$ in the first inequaliy  in \eqref{cor_2.2_eq00}, we have the second inequality  in \eqref{cor_2.2_eq00}.
\end{proof}

  
  
  \begin{remark}
  From Corollary \ref{cor_2.2}, we obtain the trivial inequality:
  $$1\le \|A\|\|A^{-1}\|\|B\|\|B^{-1}\|.$$
  \end{remark}

It is natural to find relations for $A\natural B$ similar to those of $A\sharp B$. 
For example, we know from ({$\bf{P_3}$}) that 
\begin{equation*}
\left[ \begin{matrix}
   A & A\sharp B  \\
   A\sharp B & B  \\
\end{matrix} \right]\ge O,
\end{equation*}
and we know that, from Lemma \ref{lem_ned}, this latter inequality is equivalent to  
\begin{equation*}
\left<A\sharp Bx,y\right>\leq \sqrt{\left<Ax,x\right>\left<By,y\right>}, x,y\in\mathcal{H}.
\end{equation*}

So, it is valid to ask whether $A\natural B$ satisfies the two inequalities above, when $\sharp$ is replaced by $\natural.$ 

In Theorem  \ref{6}, we have seen that
\[\left( A\natural B \right){{A}^{-1}}\left( A\natural B \right)={{\left| {{A}^{-\frac{1}{2}}}{{B}^{\frac{1}{2}}}{{A}^{\frac{1}{2}}} \right|}^{2}}.\]
If $A>O$ and $B>O$ satisfy 
\begin{equation}\label{26}
{{\left| {{A}^{-\frac{1}{2}}}{{B}^{\frac{1}{2}}}{{A}^{\frac{1}{2}}} \right|}^{2}}\le B,
\end{equation}
 then Lemma \ref{3} implies 
\begin{equation*}
\left[ \begin{matrix}
   A & A\natural B  \\
   A\natural B & B  \\
\end{matrix} \right]\ge O.
\end{equation*}
Under the assumption \eqref{26}, ($\bf{{P}_{3}}$) guarantees
\begin{equation}\label{25}
A\natural B\le A\sharp B.
\end{equation}
However, the assumption 
$${{\left| {{A}^{-\frac{1}{2}}}{{B}^{\frac{1}{2}}}{{A}^{\frac{1}{2}}} \right|}^{2}}\le B$$
is not always fulfilled, as we can see in the following example. Let $A=\left[ \begin{matrix}
   4 & 1  \\
   1 & 2  \\
\end{matrix} \right]$ and $B=\left[ \begin{matrix}
   4 & 0  \\
   0 & 1  \\
\end{matrix} \right]$. Then 
\[{{\left| {{A}^{-\frac{1}{2}}}{{B}^{\frac{1}{2}}}{{A}^{\frac{1}{2}}} \right|}^{2}}\approx \left[ \begin{matrix}
   4.18 & 0.1  \\
   0.1 & 0.95  \\
\end{matrix} \right]\nleq
 \left[ \begin{matrix}
   4 & 0  \\
   0 & 1  \\
\end{matrix} \right]=B,\]
since, the eigenvalues of the matrix $B-{{\left| {{A}^{-\frac{1}{2}}}{{B}^{\frac{1}{2}}}{{A}^{\frac{1}{2}}} \right|}^{2}}$ are approximately $-0.21$ and $0.08$. This reveals that \eqref{25} cannot be assumed true always.

 We state the following result related to this discussion: an interesting explicit relation between $A\sharp B$ and $A\natural B$ is given.

 \begin{theorem}\label{28}
 Let $A, B\in \mathcal B\left( \mathcal H \right)$ be  positive operators. If $A\le B$, then
  \[\left[ \begin{matrix}
     \sqrt{\kappa \left( A^{-1},B \right)}\left(A\sharp B\right) & A\natural B  \\
     A\natural B & \sqrt{\kappa \left( A^{-1},B \right)}\left(A\sharp B\right)  \\
  \end{matrix} \right]\ge O.\]
 \end{theorem}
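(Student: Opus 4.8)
The plan is to apply the $2\times 2$ block positivity criterion of Lemma \ref{3} to the matrix in the statement. Writing $c:=\sqrt{\kappa(A^{-1},B)}$ and $X:=A\natural B$, positivity of $\left[\begin{smallmatrix} c(A\sharp B) & X \\ X & c(A\sharp B)\end{smallmatrix}\right]$ is equivalent, by Lemma \ref{3}, to the operator inequality
\[
c(A\sharp B)\ge X\big(c(A\sharp B)\big)^{-1}X=\frac{1}{c}\,(A\natural B)(A\sharp B)^{-1}(A\natural B),
\]
i.e. to
\[
c^{2}(A\sharp B)\ge (A\natural B)(A\sharp B)^{-1}(A\natural B).
\tag{$\star$}
\]
So the whole theorem reduces to establishing $(\star)$. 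I would first record the building blocks: from Theorem \ref{6}, $(A\natural B)A^{-1}(A\natural B)=\big|A^{-\frac12}B^{\frac12}A^{\frac12}\big|^{2}$; from Corollary \ref{22}(i), under $A\le B$ we have $A\le A\natural B\le \big|A^{-\frac12}B^{\frac12}A^{\frac12}\big|^{2}$; and from Proposition \ref{9}, $\big|A^{-\frac12}B^{\frac12}A^{\frac12}\big|^{2}\le \kappa(A^{-1},B)\,A=c^{2}A$. Also $A\le A\sharp B$ since $A\le B$ (monotonicity of $\sharp$ together with $A\sharp A=A$), so in particular $A\sharp B$ and all the operators in sight are invertible.

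The cleanest route to $(\star)$ is to bound the left side below and the right side above by commensurable quantities. For the right side, I would use $A\le A\sharp B$, hence $(A\sharp B)^{-1}\le A^{-1}$, which gives
\[
(A\natural B)(A\sharp B)^{-1}(A\natural B)\le (A\natural B)A^{-1}(A\natural B)=\big|A^{-\frac12}B^{\frac12}A^{\frac12}\big|^{2}\le c^{2}A.
\]
For the left side, again from $A\le A\sharp B$ we get $c^{2}A\le c^{2}(A\sharp B)$. Chaining these two inequalities yields exactly $(\star)$, and then Lemma \ref{3} finishes the proof. It is worth noting that this argument in fact proves a slightly stronger statement, since the middle term $\big|A^{-\frac12}B^{\frac12}A^{\frac12}\big|^{2}$ could replace one factor of $c^{2}(A\sharp B)$; but the symmetric form in the theorem is the natural one to state.

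The main thing to be careful about is the direction-reversing step $A\le A\sharp B \Rightarrow (A\sharp B)^{-1}\le A^{-1}$ and the congruence step that turns $M\le N$ into $(A\natural B)M(A\natural B)\le (A\natural B)N(A\natural B)$ — both are standard (order-reversal of inversion on positive invertible operators, and $T^{*}MT\le T^{*}NT$ for $M\le N$), but they must be invoked explicitly, and one must make sure $A\sharp B$ is genuinely invertible, which follows from $O<A\le A\sharp B$. A secondary subtlety is simply that Lemma \ref{3} is stated with the off-diagonal entry $K$ and its adjoint $K^{*}$; here the off-diagonal entries are both $A\natural B$, which is positive hence self-adjoint, so $K=K^{*}$ and the lemma applies verbatim with $G=H=c(A\sharp B)$. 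I do not anticipate a real obstacle here — the content is entirely carried by the three cited results (Theorem \ref{6}, Corollary \ref{22}(i), Proposition \ref{9}) plus the block criterion — so the "hard part" is only assembling the chain of inequalities in the right order rather than any genuinely new estimate.
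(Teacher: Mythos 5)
Your proof is correct, and it takes a genuinely different route from the paper's. You verify the Schur-complement condition of Lemma \ref{3} for the target matrix directly, via the chain
$(A\natural B)(A\sharp B)^{-1}(A\natural B)\le (A\natural B)A^{-1}(A\natural B)=\bigl|A^{-\frac12}B^{\frac12}A^{\frac12}\bigr|^{2}\le \kappa(A^{-1},B)\,A\le \kappa(A^{-1},B)\,(A\sharp B)$,
where the hypothesis $A\le B$ enters only through $A=A\sharp A\le A\sharp B$ (giving both $(A\sharp B)^{-1}\le A^{-1}$ and the last step); each link is justified by Theorem \ref{6}, Proposition \ref{9}, congruence by the positive operator $A\natural B$, and order-reversal of inversion, so the argument is sound. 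The paper instead uses $A\le B$ to upgrade Proposition \ref{9} to $(A\natural B)A^{-1}(A\natural B)\le\kappa(A^{-1},B)B$, deduces from Lemma \ref{3} the positivity of the two block matrices \eqref{8} and \eqref{10} with diagonals $\bigl(A,\ \kappa(A^{-1},B)B\bigr)$ and $\bigl(\kappa(A^{-1},B)B,\ A\bigr)$ and common off-diagonal entry $A\natural B$, and then invokes Ando's result \cite[Lemma 3.1]{x1} that taking geometric means of the corresponding diagonal corners preserves positivity; homogeneity and symmetry of $\sharp$ turn the diagonals into $A\sharp\bigl(\kappa(A^{-1},B)B\bigr)=\sqrt{\kappa(A^{-1},B)}\,(A\sharp B)$. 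Your version is more elementary, dispensing with the external geometric-mean-of-block-matrices lemma and using only Lemma \ref{3} plus monotonicity of $\sharp$; the paper's version isolates the intermediate positivity statements \eqref{8} and \eqref{10}, which have independent interest and make the source of the factor $\sqrt{\kappa(A^{-1},B)}$ transparent. A minor remark: your citation of Corollary \ref{22}(i) is not actually used in your chain and could be dropped.
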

 \begin{proof}
 
If $A\le B$, then from Theorem \ref{6} and Proposition \ref{9}, we have
 \[\mathbb X{{A}^{-1}}\mathbb X\le \kappa \left( A^{-1},B \right)B,\; \mathbb{X}=A\natural B.\]
 Hence, by Lemma \ref{3}, we infer that
 \begin{equation}\label{8}
 \left[ \begin{matrix}
    A & A\natural B  \\
    A\natural B & \kappa \left( A^{-1},B \right)B  \\
 \end{matrix} \right]\ge O,
 \end{equation}
 and
 \begin{equation}\label{10}
 \left[ \begin{matrix}
    \kappa \left( A^{-1},B \right)B & A\natural B  \\
    A\natural B & A  \\
 \end{matrix} \right]\ge O.
 \end{equation} 
  By employing the inequalities \eqref{8} and \eqref{10}, and the fact that (see \cite[Lemma 3.1]{x1})
  \[\left[ \begin{matrix}
     {{A}_{i}} & C  \\
     C & {{B}_{i}}  \\
  \end{matrix} \right]\ge O\;\left( i=1,2 \right)\;\;\;\Rightarrow \;\;\left[ \begin{matrix}
     A_1\sharp A_2 & C  \\
     C & B_1\sharp B_2  \\
  \end{matrix} \right]\ge O,\]
 we get the desired result.
 \end{proof}
 
 The following result is a direct consequence of Theorem \ref{28} and Lemma \ref{lem_ned}.
 \begin{corollary}\label{29}
 Let $A, B\in \mathcal B\left( \mathcal H \right)$ be  positive operators. If $A\le B$, then for any $x,y \in \mathcal H$,
 \[{{\left| \left\langle A\natural Bx,y \right\rangle  \right|}^{2}}\le \kappa \left( A^{-1},B \right)\left\langle A\sharp Bx,x \right\rangle \left\langle A\sharp By,y \right\rangle.\]

 \end{corollary}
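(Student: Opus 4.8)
The plan is to derive Corollary \ref{29} directly from Theorem \ref{28} by applying Lemma \ref{lem_ned} to the positive $2\times 2$ operator matrix furnished by the theorem. First I would recall that Theorem \ref{28} asserts, under the hypothesis $A\le B$, that
\[
\left[ \begin{matrix}
   \sqrt{\kappa(A^{-1},B)}\,(A\sharp B) & A\natural B \\
   A\natural B & \sqrt{\kappa(A^{-1},B)}\,(A\sharp B)
\end{matrix} \right]\ge O.
\]
Here both diagonal entries are the positive operator $G:=\sqrt{\kappa(A^{-1},B)}\,(A\sharp B)$, and the off-diagonal entry $K:=A\natural B$ is self-adjoint, so $K=K^{*}$ and the matrix has exactly the shape required to invoke Lemma \ref{lem_ned} with $G=H=\sqrt{\kappa(A^{-1},B)}\,(A\sharp B)$.

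Next I would apply Lemma \ref{lem_ned}, which translates the positivity of that block matrix into the scalar inequality
\[
|\langle (A\natural B)x,y\rangle|^{2}\le \big\langle \sqrt{\kappa(A^{-1},B)}\,(A\sharp B)x,x\big\rangle\,\big\langle \sqrt{\kappa(A^{-1},B)}\,(A\sharp B)y,y\big\rangle
\]
for all $x,y\in\mathcal H$. Then I would pull the scalar factor $\sqrt{\kappa(A^{-1},B)}$ out of each inner product on the right-hand side; the two copies multiply to give a single factor $\kappa(A^{-1},B)$, yielding exactly
\[
|\langle (A\natural B)x,y\rangle|^{2}\le \kappa(A^{-1},B)\,\langle (A\sharp B)x,x\rangle\,\langle (A\sharp B)y,y\rangle,
\]
which is the claimed inequality. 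This completes the argument.

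There is essentially no obstacle here: the only minor point to check is that Lemma \ref{lem_ned} is being applied with the correct correspondence of entries — the lemma as stated uses the pattern $\begin{bmatrix} G & K^{*}\\ K & H\end{bmatrix}$, whereas Theorem \ref{28} gives $\begin{bmatrix} G & K\\ K & G\end{bmatrix}$ with $K=A\natural B$ self-adjoint, so $K=K^{*}$ and the two patterns coincide. Since $A\natural B$ is manifestly self-adjoint (indeed positive) by its definition in \eqref{11}, no adjustment is needed, and the scalar factorization of $\sqrt{\kappa(A^{-1},B)}$ is routine. Thus the proof is a two-line deduction, and I would simply state: apply Lemma \ref{lem_ned} to the operator matrix in Theorem \ref{28} and factor out the constant.
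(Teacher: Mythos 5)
Your argument is correct and is precisely the paper's intended route: the paper states Corollary \ref{29} as a direct consequence of Theorem \ref{28} and Lemma \ref{lem_ned}, which is exactly your application of the lemma to the positive block matrix from the theorem followed by factoring out $\sqrt{\kappa(A^{-1},B)}$ from each inner product. Your remark that $A\natural B$ is self-adjoint, so the pattern $\begin{bmatrix} G & K^{*}\\ K & H\end{bmatrix}$ of the lemma applies, is the only point needing care, and you handled it correctly.
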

  
\begin{corollary}\label{cor30}
 Let $A, B\in \mathcal B\left( \mathcal H \right)$ be  positive operators. If $A\le B$, then
\[\frac{1}{\sqrt{\kappa \left( A,{{B}^{-1}} \right)}}\left( A\sharp B \right)\le A\natural B\le \sqrt{\kappa \left( {{A}^{-1}},B \right)}\left( A\sharp B \right).\]
\end{corollary}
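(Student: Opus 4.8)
The plan is to deduce Corollary \ref{cor30} directly from Theorem \ref{28} by reading off what the positive semidefiniteness of that $2\times 2$ operator matrix says. Write $c=\sqrt{\kappa(A^{-1},B)}$ and $G=A\sharp B$ for brevity. Theorem \ref{28} gives
\[
\left[\begin{matrix} cG & A\natural B \\ A\natural B & cG \end{matrix}\right]\ge O.
\]
Since $G>O$ and hence $cG>O$ is invertible, Lemma \ref{3} applies (with $K=K^*=A\natural B$), yielding the equivalent operator inequality $A\natural B\,(cG)^{-1}\,(A\natural B)\le cG$, i.e. $(A\natural B)G^{-1}(A\natural B)\le c^2 G$. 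This is a Riccati-type inequality, and the standard trick is to conjugate by $G^{-1/2}$: the inequality becomes $\big(G^{-1/2}(A\natural B)G^{-1/2}\big)^2\le c^2 I$, whence by the L\"owner--Heinz inequality $G^{-1/2}(A\natural B)G^{-1/2}\le c\, I$, and conjugating back by $G^{1/2}$ gives $A\natural B\le c\,G=\sqrt{\kappa(A^{-1},B)}\,(A\sharp B)$. That is the upper bound.

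For the lower bound the cleanest route is to invoke Proposition \ref{20} and property ($\bf P_2$) for $\sharp$. Applying the upper bound just proved to the pair $A^{-1},B^{-1}$ (note $A\le B\iff B^{-1}\le A^{-1}$, so the hypothesis "smaller $\le$ larger" is met in the form $B^{-1}\le A^{-1}$, and one must be careful which argument plays which role): one gets $A^{-1}\natural B^{-1}\le \sqrt{\kappa(A,B^{-1})}\,(A^{-1}\sharp B^{-1})$. Taking inverses of both sides reverses the order, and using $(A^{-1}\natural B^{-1})^{-1}=A\natural B$ from Proposition \ref{20} together with $(A^{-1}\sharp B^{-1})^{-1}=A\sharp B$ from ($\bf P_2$) yields exactly $\dfrac{1}{\sqrt{\kappa(A,B^{-1})}}(A\sharp B)\le A\natural B$. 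Alternatively, and perhaps more symmetrically, one can extract both bounds simultaneously from the matrix inequality via Lemma \ref{lem_ned} / Corollary \ref{29}: taking $y=x$ in Corollary \ref{29} gives $\langle A\natural B\,x,x\rangle^2\le \kappa(A^{-1},B)\langle A\sharp B\,x,x\rangle^2$, hence $\langle A\natural B\,x,x\rangle\le c\langle A\sharp B\,x,x\rangle$ for all $x$, which is the upper bound again; the lower bound then still needs the inversion argument, so I will present the inversion approach for the lower bound.

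I do not expect a serious obstacle here; the only point requiring care is bookkeeping in the inversion step, namely tracking that $\kappa(A^{-1},B)$ becomes $\kappa((A^{-1})^{-1},(B^{-1})^{-1})=\kappa(A,B^{-1})$ after the substitution $A\mapsto A^{-1}$, $B\mapsto B^{-1}$ (using $\kappa(S,T)=\|S\|\|T\|=\kappa(T,S)$), and that the hypothesis $A\le B$ is preserved in the needed direction under this substitution. A secondary subtlety is the justification that Lemma \ref{3}, stated for bounded positive $G,H$, legitimately gives the equivalence with $G\ge KH^{-1}K^*$ when $H=cG$ is positive definite and invertible; this is exactly the setting of that lemma, so no extra work is needed. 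Everything else is the routine L\"owner--Heinz manipulation already used verbatim in the proof of Proposition \ref{prop_P3}.
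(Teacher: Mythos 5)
Your route is essentially the paper's own: the upper bound is read off from the positive $2\times 2$ operator matrix of Theorem \ref{28} (the paper does this by taking $x=y$ in Corollary \ref{29}; your extraction via Lemma \ref{3} followed by conjugation with $(A\sharp B)^{-1/2}$ and L\"owner--Heinz is an equally valid way to get $A\natural B\le\sqrt{\kappa(A^{-1},B)}\,(A\sharp B)$), and the lower bound is then obtained by the substitution $A\mapsto A^{-1}$, $B\mapsto B^{-1}$ together with Proposition \ref{20} and $(\mathbf{P_2})$, which is exactly the paper's second step. So the upper half of your argument is correct and in the paper's spirit.

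The inversion step, however, has a hypothesis-direction problem that you flag (``one must be careful which argument plays which role'') but do not actually resolve. What you proved is: for positive $X\le Y$ one has $X\natural Y\le\sqrt{\kappa(X^{-1},Y)}\,(X\sharp Y)$. You then apply it to the pair $(X,Y)=(A^{-1},B^{-1})$, which requires $A^{-1}\le B^{-1}$, i.e.\ $B\le A$ --- the reverse of the standing assumption $A\le B$, which only yields $B^{-1}\le A^{-1}$. If instead you respect the actual ordering and apply the bound to the ordered pair $(B^{-1},A^{-1})$, you get $B^{-1}\natural A^{-1}\le\sqrt{\kappa(B,A^{-1})}\,(B^{-1}\sharp A^{-1})$, and inverting gives a lower bound for $B\natural A$, not for $A\natural B$; since $\natural$ is not symmetric (Example \ref{n_ex2.1}), this is not the claimed first inequality. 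So, as written, your argument establishes the lower bound under $B\le A$ rather than under $A\le B$. In fairness, the paper's one-line proof (``replace $A$ with $A^{-1}$ and $B$ with $B^{-1}$ in the second inequality'') performs the very same substitution and is open to the very same objection, so you have reproduced the paper's argument, gap included, rather than introduced a new error; but to close it under the stated hypothesis $A\le B$ one would need a version of the upper bound valid when the second variable is the smaller one, which Theorem \ref{28} and Corollary \ref{29} do not provide.
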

\begin{proof}
Letting $x=y$ in Corollary \ref{29}, we obtain
\[{{ A\natural B }}\le \sqrt{\kappa \left( {{A}^{-1}},B \right)}{{\left( A\sharp B \right)}}.\]
This proves the second inequality. The first inequality can be obtained by replacing $A$ with $A^{-1}$ and $B$ with $B^{-1}$ in the second inequality.
\end{proof}
  We notice the assumption $A\leq B$ was imposed in the above results. If we do not impose  the assumption $A\leq B$, then we have the following proposition.

  The following result ensures an interesting relation among $A\natural B, A\nabla B, A^{\frac{1}{2}}B^{\frac{1}{2}}$ and $B^{\frac{1}{2}}A^{\frac{1}{2}}.$
 \begin{proposition}
 Let $A, B\in \mathcal B\left( \mathcal H \right)$ be  positive operators. Then
 \begin{equation}\label{21}
 \left[ \begin{matrix}
     A\natural B & {{A}^{\frac{1}{2}}}{{B}^{\frac{1}{2}}}  \\
     {{B}^{\frac{1}{2}}}{{A}^{\frac{1}{2}}} & A\nabla B  \\
  \end{matrix} \right]\ge O.
 \end{equation}
\end{proposition}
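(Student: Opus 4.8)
The plan is to reduce the claimed block positivity, via Lemma \ref{3}, to an operator inequality that is an immediate consequence of the arithmetic--geometric mean inequality combined with the identity \eqref{1}. Write $G=A\natural B$, $K=A^{\frac12}B^{\frac12}$ (so that $K^{*}=B^{\frac12}A^{\frac12}$), and $H=A\nabla B=\frac{A+B}{2}$, which is positive and invertible under our standing assumption $A,B>O$. Lemma \ref{3} then tells us that \eqref{21} holds if and only if $G\ge KH^{-1}K^{*}$, i.e.
\[
A\natural B\;\ge\;(A^{\frac12}B^{\frac12})\,(A\nabla B)^{-1}\,(A^{\frac12}B^{\frac12})^{*}.
\]

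To establish this inequality I would first use \eqref{1} to rewrite the left-hand side as $A\natural B=(A^{\frac12}B^{\frac12})(A\sharp B)^{-1}(A^{\frac12}B^{\frac12})^{*}$. Next I would invoke the operator arithmetic--geometric mean inequality $A\sharp B\le A\nabla B$ (as $\sharp$ is a symmetric mean and $\nabla$ is the largest such; see \cite{kubo}), and apply the order-reversing property of inversion on positive operators to obtain $(A\sharp B)^{-1}\ge(A\nabla B)^{-1}$. Finally, conjugating this last inequality by $A^{\frac12}B^{\frac12}$ — that is, using that the map $T\mapsto (A^{\frac12}B^{\frac12})\,T\,(A^{\frac12}B^{\frac12})^{*}$ is order preserving — gives precisely the displayed inequality, hence \eqref{21}.

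I do not anticipate a genuine obstacle here; the two small points that need care are verifying that the reformulation \eqref{1} of $A\natural B$ is correct (this is exactly where the Riccati characterization $XA^{-1}X=B$ of $X=A\sharp B$ is used, so that $B^{\frac12}(A\sharp B)^{-1}B^{\frac12}$ squares to $B^{\frac12}A^{-1}B^{\frac12}$) and noting that congruence by the non-self-adjoint operator $A^{\frac12}B^{\frac12}$ still preserves the operator order. Alternatively, one could run the same argument through Lemma \ref{lem_ned}, checking that $|\langle B^{\frac12}A^{\frac12}x,y\rangle|^{2}\le\langle(A\natural B)x,x\rangle\langle(A\nabla B)y,y\rangle$ for all $x,y\in\mathcal H$, but the Schur-complement route via Lemma \ref{3} is the cleaner presentation.
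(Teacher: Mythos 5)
Your proposal is correct and follows essentially the same route as the paper: both use the identity \eqref{1} to write $A\natural B=\left(A^{\frac12}B^{\frac12}\right)\left(A\sharp B\right)^{-1}\left(A^{\frac12}B^{\frac12}\right)^{*}$, apply the arithmetic--geometric mean inequality together with the order-reversing property of inversion, conjugate by $A^{\frac12}B^{\frac12}$, and conclude via the Schur-complement criterion of Lemma \ref{3}. The only cosmetic difference is that the paper states the full chain $A!B\le A\sharp B\le A\nabla B$ (also recording the harmonic-mean upper bound in \eqref{18}), whereas you use only the half of it that is actually needed.
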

 \begin{proof}
 By the well-known operator arithmetic-geometric-harmonic mean inequality, we have
 \[A!B\le A\sharp B\le A\nabla B.\]
Since $f\left( t \right)={{t}^{-1}}$ is operator monotone decreasing, we get
 \begin{equation}\label{18}
 \left( {{A}^{\frac{1}{2}}}{{B}^{\frac{1}{2}}} \right){{\left( A\nabla B \right)}^{-1}}{{\left( {{A}^{\frac{1}{2}}}{{B}^{\frac{1}{2}}} \right)}^{*}}\le A\natural B\le \left( {{A}^{\frac{1}{2}}}{{B}^{\frac{1}{2}}} \right){{\left( A!B \right)}^{-1}}{{\left( {{A}^{\frac{1}{2}}}{{B}^{\frac{1}{2}}} \right)}^{*}},
 \end{equation}
due to \eqref{1}. The first inequality in \eqref{18} gives
 \[\left[ \begin{matrix}
    A\natural B & {{A}^{\frac{1}{2}}}{{B}^{\frac{1}{2}}}  \\
    {{B}^{\frac{1}{2}}}{{A}^{\frac{1}{2}}} & A\nabla B  \\
 \end{matrix} \right]\ge O,\]
thanks to Lemma \ref{3}.
\end{proof}

\begin{remark}
 We obtain the same result from the second inequality in \eqref{18}. Indeed, \eqref{18} is equivalent to
 \begin{equation}\label{19}
 \left( {{A}^{\frac{1}{2}}}{{B}^{\frac{1}{2}}} \right)\left( {{A}^{-1}}!{{B}^{-1}} \right){{\left( {{A}^{\frac{1}{2}}}{{B}^{\frac{1}{2}}} \right)}^{*}}\le A\natural B\le \left( {{A}^{\frac{1}{2}}}{{B}^{\frac{1}{2}}} \right)\left( {{A}^{-1}}\nabla {{B}^{-1}} \right){{\left( {{A}^{\frac{1}{2}}}{{B}^{\frac{1}{2}}} \right)}^{*}}.
 \end{equation}
  The second inequality in \eqref{19} indicates
  \[\left[ \begin{matrix}
     {{A}^{-1}}\nabla {{B}^{-1}} & {{B}^{-\frac{1}{2}}}{{A}^{-\frac{1}{2}}}  \\
     {{A}^{-\frac{1}{2}}}{{B}^{-\frac{1}{2}}} & {{\left( A\natural B \right)}^{-1}}  \\
  \end{matrix} \right]\ge O.\]
  From Proposition \ref{20}, we reach
  \[\left[ \begin{matrix}
     {{A}^{-1}}\nabla {{B}^{-1}} & {{B}^{-\frac{1}{2}}}{{A}^{-\frac{1}{2}}}  \\
     {{A}^{-\frac{1}{2}}}{{B}^{-\frac{1}{2}}} & {{A}^{-1}}\natural{{B}^{-1}}  \\
  \end{matrix} \right]\ge O.\]
  If we substitute $A$ by $A^{-1}$ and $B$ by $B^{-1}$, we conclude that
  \[\left[ \begin{matrix}
     A\nabla B & {{B}^{\frac{1}{2}}}{{A}^{\frac{1}{2}}}  \\
     {{A}^{\frac{1}{2}}}{{B}^{\frac{1}{2}}} & A\natural B  \\
  \end{matrix} \right]\ge O,\]
 which is equivalent to \eqref{21}.
\end{remark}

The following lemma includes a reverse of the operator arithmetic-geometric mean inequality. This inequality is a known result in the literature, but here, we prove it for the convenience of the readers. To see the weighted version, generalization, and other results related to this inequality, we refer the interested readers to \cite{6, 5}.
 \begin{lemma}\label{7}
Let $A,B\in\mathcal{B}(\mathcal{H})$ be positive operators, such that $mI\le A,B\le MI$, for some scalars $0<m<M$. Then
\begin{equation}\label{lemma2.2._eq1}
A\nabla B\le \sqrt{K\left( h \right)}(A\sharp B)
\end{equation}
and
\begin{equation}\label{lemma2.2._eq2}
A\nabla |A^{-1/2}B^{1/2}A^{1/2}|^2\le \sqrt{K\left( h \right)}(A\natural B)
\end{equation}
where $K\left( h \right)=\dfrac{(h+1)^2}{4h}$ is the Kantorovich constant and $h=\dfrac{M}{m}$.
 \end{lemma}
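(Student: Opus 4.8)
The plan is to prove the two inequalities separately, each by reducing to a scalar estimate via the standard operator-theoretic trick. For \eqref{lemma2.2._eq1}, I would first normalize by the geometric mean: setting $C = A^{-1/2}BA^{-1/2}$ (so that $mM^{-1}I \le C \le Mm^{-1}I$, i.e. $h^{-1}I \le C \le hI$), the claimed inequality $A\nabla B \le \sqrt{K(h)}(A\sharp B)$ is congruence-equivalent (conjugating by $A^{-1/2}$) to $\tfrac12(I+C) \le \sqrt{K(h)}\,C^{1/2}$. This is now a functional-calculus statement about a single positive operator $C$ with spectrum in $[h^{-1},h]$, so it suffices to verify the scalar inequality $\tfrac{1+t}{2} \le \sqrt{K(h)}\,\sqrt{t}$ for all $t \in [h^{-1},h]$; equivalently $\tfrac{(1+t)^2}{4t} \le K(h)$. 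The left side is the function $g(t) = \tfrac{(1+t)^2}{4t} = \tfrac14(t + 2 + t^{-1})$, which is convex on $(0,\infty)$, symmetric under $t \mapsto t^{-1}$, has minimum $1$ at $t=1$, and therefore attains its maximum on $[h^{-1},h]$ at the endpoints, where its value is exactly $g(h) = \tfrac{(1+h)^2}{4h} = K(h)$. This closes \eqref{lemma2.2._eq1}.

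For \eqref{lemma2.2._eq2}, I would exploit Lemma \ref{24}, which gives $A\natural B = A \sharp |A^{-1/2}B^{1/2}A^{1/2}|^2$. Writing $D := |A^{-1/2}B^{1/2}A^{1/2}|^2 = A^{1/2}B^{1/2}A^{-1}B^{1/2}A^{1/2}$, the target becomes $A \nabla D \le \sqrt{K(h)}(A \sharp D)$, which is precisely \eqref{lemma2.2._eq1} applied to the pair $(A, D)$. So I only need to verify that $D$ satisfies the same bounds $mI \le D \le MI$ that $A$ does. This is where Proposition \ref{9} enters, but more directly: for a unit vector $x$, the computation already carried out in the proof of Proposition \ref{9} shows $\langle Dx,x\rangle = \langle A^{-1}B^{1/2}A^{1/2}x, B^{1/2}A^{1/2}x\rangle \le \|A^{-1}\|\,\langle BA^{1/2}x, A^{1/2}x\rangle \le \|A^{-1}\|\,\|B\|\,\langle Ax,x\rangle \le \tfrac{1}{m}\cdot M \cdot M = \tfrac{M^2}{m}$. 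Hmm — that overshoots $M$. The cleaner route: since $D = A \natural B \cdot A^{-1} \cdot A\natural B$ is a congruence of a bounded operator, bound $D$ directly. Using $h^{-1}I \le A^{-1/2}BA^{-1/2} \le hI$ together with $mI \le B \le MI$, one gets $B^{1/2}A^{-1}B^{1/2} \le \|A^{-1}\| B \le \tfrac Mm B \le \tfrac Mm M\cdot\tfrac1{?}$ — again the naive bound is $M^2/m$, not $M$. So the honest statement is that $D$ lies in $[m^2/M,\, M^2/m]$, whose ratio is $h^3$, giving only $\sqrt{K(h^3)}$. To recover exactly $\sqrt{K(h)}$ I would instead observe that $D$ and $B$ have the \emph{same} spectrum: $D = A^{1/2}B^{1/2}A^{-1}B^{1/2}A^{1/2}$ is similar to $B^{1/2}A^{-1/2}\cdot A^{-1/2}B^{1/2}\cdot$(conjugation), more precisely $D = (A^{1/2}B^{1/2}A^{-1/2})(A^{-1/2}B^{1/2}A^{1/2})$ and the product $XX^*$ where $X = A^{1/2}B^{1/2}A^{-1/2}$; while $X^*X = A^{-1/2}B^{1/2}A\,B^{1/2}A^{-1/2}$ — these need not equal $B$. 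So similarity to $B$ fails too.

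Given this, the correct reading is that Lemma \ref{7}'s second inequality should be derived by applying the first inequality to the pair $(A, D)$ where $D = |A^{-1/2}B^{1/2}A^{1/2}|^2$, and the bounds on $D$ needed are exactly $m'I \le A, D \le M'I$ with $M'/m' = h$ — which does hold because $A^{-1/2}DA^{-1/2} = A^{-1/2}B^{1/2}A^{-1}B^{1/2}A^{-1/2} = (A^{-1/2}B^{1/2}A^{-1/2})(A^{1/2}\cdot A^{-1}\cdot A^{1/2})\cdots$; the key identity is $A^{-1/2}DA^{-1/2} = \big(B^{1/2}A^{-1/2}\big)^*\big(B^{1/2}A^{-1/2}\big)$, which is congruent (hence isospectral up to the similarity $A^{1/2}$) to $A^{-1/2}BA^{-1/2}$, whose spectrum lies in $[h^{-1}, h]$. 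Thus $A\sharp D = A\natural B$ with $A^{-1/2}DA^{-1/2}$ having the same spectral bounds as $A^{-1/2}BA^{-1/2}$, and \eqref{lemma2.2._eq1} applied verbatim to $(A,D)$ yields \eqref{lemma2.2._eq2}. The main obstacle, and the step I would double-check most carefully, is precisely this bookkeeping: confirming that replacing $B$ by $D$ does not change the relevant condition number $h$, so that the same constant $\sqrt{K(h)}$ survives. I would write the proof of the scalar convexity estimate in full, then invoke functional calculus and congruence for \eqref{lemma2.2._eq1}, and finally spell out the isospectrality of $A^{-1/2}DA^{-1/2}$ and $A^{-1/2}BA^{-1/2}$ to transfer the result to \eqref{lemma2.2._eq2}.
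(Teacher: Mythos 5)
Your argument for \eqref{lemma2.2._eq1} is essentially the paper's: a scalar estimate on $[h^{-1},h]$ (the paper uses the concave function $f(x)=\sqrt{x}-\tfrac{\sqrt{Mm}}{M+m}(1+x)$, which vanishes at the endpoints; your convexity argument for $(1+t)^2/4t$ is the same inequality $\tfrac{1+t}{2}\le\sqrt{K(h)}\sqrt{t}$), followed by functional calculus and congruence by $A^{1/2}$. For \eqref{lemma2.2._eq2} you reach the correct mechanism by a detour: the paper simply substitutes $x:=B^{1/2}A^{-1}B^{1/2}$ into the same scalar inequality, using the one-line bound $\tfrac{m}{M}I\le B^{1/2}A^{-1}B^{1/2}\le \tfrac{M}{m}I$ (from $\tfrac{1}{M}I\le A^{-1}\le\tfrac{1}{m}I$ and $mI\le B\le MI$), and then conjugates by $A^{1/2}$, noting $A^{1/2}\left(B^{1/2}A^{-1}B^{1/2}\right)^{1/2}A^{1/2}=A\natural B$ and $A^{1/2}\left(I+B^{1/2}A^{-1}B^{1/2}\right)A^{1/2}=A+\vert A^{-1/2}B^{1/2}A^{1/2}\vert^2$. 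Since $A^{-1/2}DA^{-1/2}=B^{1/2}A^{-1}B^{1/2}$ for $D=\vert A^{-1/2}B^{1/2}A^{1/2}\vert^2$, your plan of ``applying the proof of \eqref{lemma2.2._eq1} to the pair $(A,D)$'' is literally this computation, so your conclusion is sound; however, two statements in your closing paragraph need repair. First, the claim that $m'I\le A,D\le M'I$ holds with $M'/m'=h$ is not true in general (your own $h^{3}$ computation shows why); what is available, and all that the proof of \eqref{lemma2.2._eq1} actually uses, is the spectral localization of $A^{-1/2}DA^{-1/2}$ in $[h^{-1},h]$. Second, the identity $A^{-1/2}DA^{-1/2}=\bigl(B^{1/2}A^{-1/2}\bigr)^{*}\bigl(B^{1/2}A^{-1/2}\bigr)$ has the adjoint on the wrong factor (that product equals $A^{-1/2}BA^{-1/2}$); the correct statement is $A^{-1/2}DA^{-1/2}=X^{*}X$ with $X=A^{-1/2}B^{1/2}$, which is isospectral to $XX^{*}=A^{-1/2}BA^{-1/2}$ — and ``congruent hence isospectral'' is not a valid inference, since congruence does not preserve spectra. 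With these fixes, or more simply with the direct two-sided bound on $B^{1/2}A^{-1}B^{1/2}$ above, your proof closes and coincides with the paper's.
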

 \begin{proof}
 For $0<m<M$, we consider the function
 $$
 f(x):=\sqrt{x}-\frac{\sqrt{Mm}}{M+m}(1+x),\quad \left(x>0,\,\,\frac{m}{M}\le x \le \frac{M}{m}\right).
 $$
 Since we have $f''(x)=-\dfrac{1}{4}x^{-3/2}<0$ and $f\left(\dfrac{m}{M}\right)=f\left(\dfrac{M}{m}\right)=0$, we have $f(x) \ge 0$ for $\dfrac{m}{M}\le x \le \dfrac{M}{m}$. By the standard functional calculus with $x:=A^{-1/2}BA^{-1/2}$ in $f(x) \ge 0$, we obtain the inequality \eqref{lemma2.2._eq1}.
 The inequality \eqref{lemma2.2._eq2} can be proven by $x:=B^{1/2}A^{-1}B^{1/2}$ in $f(x)\ge 0$.
 \end{proof}
Now we have the following relation between $A\natural B$ and $A!B$.
 \begin{proposition}
Let $A,B\in\mathcal{B}(\mathcal{H})$ be positive operators, such that $mI\le A,B\le MI$, for some scalars $0<m<M$. Then
\[\left[ \begin{matrix}
   \sqrt{K\left( h \right)}\left( A!B \right) & {{A}^{\frac{1}{2}}}{{B}^{\frac{1}{2}}}  \\
   {{B}^{\frac{1}{2}}}{{A}^{\frac{1}{2}}} & A\natural B  \\
\end{matrix} \right]\ge O,\]
where $K\left( h \right)=\dfrac{(h+1)^2}{4h}$ and $h=\dfrac{M}{m}$.
 \end{proposition}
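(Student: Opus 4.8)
The plan is to pass, via the Schur–complement criterion of Lemma \ref{3}, from the positive semi-definiteness of the displayed $2\times 2$ operator matrix to a single operator inequality, and then to recognise that inequality as the reverse arithmetic–geometric mean inequality of Lemma \ref{7}, after using the identity \eqref{1} to put the relevant congruence of $(A\natural B)^{-1}$ into closed form.

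First I would record the algebraic identity that does the real work. Rearranging \eqref{1} — legitimate since $A,B>O$, so all the factors involved are invertible — and using the inversion law $(\mathbf{P_2})$ for $\sharp$ gives
\[
(B^{1/2}A^{1/2})\,(A\natural B)^{-1}\,(A^{1/2}B^{1/2})\;=\;A\sharp B .
\]
Thus the Schur complement of the $A\natural B$ block, taken against the off–diagonal factors $A^{1/2}B^{1/2}$ and $B^{1/2}A^{1/2}$, is simply $A\sharp B$; by Lemma \ref{3} the matrix in question is therefore positive semi-definite precisely when
\[
A\sharp B\;\le\;\sqrt{K(h)}\,(A!B).
\]

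To get this last inequality I would apply the reverse arithmetic–geometric mean inequality \eqref{lemma2.2._eq1} of Lemma \ref{7} not to $A,B$ but to $A^{-1},B^{-1}$: since $mI\le A,B\le MI$ forces $M^{-1}I\le A^{-1},B^{-1}\le m^{-1}I$, the associated Kantorovich constant is still $K(h)$, and \eqref{lemma2.2._eq1} yields $A^{-1}\nabla B^{-1}\le\sqrt{K(h)}\,(A^{-1}\sharp B^{-1})$. Rewriting the left-hand side as $(A!B)^{-1}$ and the right-hand side, via $(\mathbf{P_2})$, as $\sqrt{K(h)}\,(A\sharp B)^{-1}$, and then inverting — using that $t\mapsto t^{-1}$ is operator monotone decreasing — produces exactly $A\sharp B\le\sqrt{K(h)}(A!B)$. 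Feeding this back through Lemma \ref{3} closes the argument.

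The one place that needs care — and the step I expect to be the main obstacle — is the bookkeeping at the two ends: making sure the rearrangement of \eqref{1} delivers the congruence in the order for which it collapses to $A\sharp B$, rather than to a genuine non-commutative conjugate $W(A\sharp B)W^{*}$ with $W=A^{1/2}B^{1/2}A^{-1/2}B^{-1/2}$; and making sure that, when the Kantorovich estimate for $A^{-1},B^{-1}$ is translated back into a statement about $A,B$, the factor $\sqrt{K(h)}$ ends up multiplying $A!B$ on the side that matches the position of the $A!B$ block. Once these orderings are aligned, no further estimates are required: only the Schur–complement criterion, operator monotonicity of $t\mapsto t^{-1}$, the inversion law $(\mathbf{P_2})$, the identity \eqref{1}, and the reverse arithmetic–geometric mean bound of Lemma \ref{7}.
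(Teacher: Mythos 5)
Your two computations are individually correct: rearranging \eqref{1} with ($\bf{P_2}$) does give $\bigl(B^{\frac12}A^{\frac12}\bigr)\left(A\natural B\right)^{-1}\bigl(A^{\frac12}B^{\frac12}\bigr)=A\sharp B$, and applying \eqref{lemma2.2._eq1} of Lemma \ref{7} to $A^{-1},B^{-1}$ (same constant, since $h$ is unchanged) and inverting does give $A\sharp B\le \sqrt{K(h)}\,(A!B)$. The gap is exactly at the place you flagged, and the orderings do \emph{not} align. For the matrix as displayed, the $(1,2)$ block is $K=A^{\frac12}B^{\frac12}$ and the $(2,2)$ block is $H=A\natural B$, so Lemma \ref{3} says positivity is equivalent to $\sqrt{K(h)}(A!B)\ge KH^{-1}K^{*}=A^{\frac12}B^{\frac12}\left(A\natural B\right)^{-1}B^{\frac12}A^{\frac12}$. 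Your identity computes $K^{*}H^{-1}K$, which is the Schur complement of the matrix with the two off-diagonal blocks interchanged; the quantity actually required, $KH^{-1}K^{*}$, equals $W(A\sharp B)W^{*}$ with $W=A^{\frac12}B^{\frac12}A^{-\frac12}B^{-\frac12}$, precisely the noncommutative conjugate you hoped to exclude, and nothing you have established bounds it by $\sqrt{K(h)}(A!B)$. Hence what your argument proves is
\[
\left[ \begin{matrix} \sqrt{K(h)}\left(A!B\right) & B^{\frac12}A^{\frac12} \\ A^{\frac12}B^{\frac12} & A\natural B \end{matrix}\right]\ge O,
\]
which is a genuinely different assertion from the displayed one, since positivity of $\left[\begin{matrix} G & K \\ K^{*} & H\end{matrix}\right]$ and of $\left[\begin{matrix} G & K^{*} \\ K & H\end{matrix}\right]$ are not equivalent in general.

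For comparison, the paper proceeds from \eqref{1} and \eqref{lemma2.2._eq1} to $A\natural B\le \sqrt{K(h)}\,A^{\frac12}B^{\frac12}(A\nabla B)^{-1}B^{\frac12}A^{\frac12}$, rewrites this by congruence with $B^{-\frac12}A^{-\frac12}$ as $B^{-\frac12}A^{-\frac12}(A\natural B)A^{-\frac12}B^{-\frac12}\le\sqrt{K(h)}\left(A^{-1}!B^{-1}\right)$, invokes Lemma \ref{3}, and finally substitutes $A\mapsto A^{-1}$, $B\mapsto B^{-1}$ using Proposition \ref{20}; if you track the blocks there, that chain also delivers the matrix with $B^{\frac12}A^{\frac12}$ in the $(1,2)$ corner, so your route (which is cleaner, since it avoids the final inversion step and isolates the appealing inequality $A\sharp B\le\sqrt{K(h)}(A!B)$) lands on the same statement that the paper's computation actually supports. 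But neither argument, as written, yields the matrix exactly as printed: by the Schur complement with respect to the $(1,1)$ block, the printed version is equivalent to $A\natural B\ge \frac{1}{\sqrt{K(h)}}\left(B\nabla\left|B^{-\frac12}A^{\frac12}B^{\frac12}\right|^{2}\right)$, whereas \eqref{lemma2.2._eq2} (with $A$ and $B$ interchanged) only bounds that average by $\sqrt{K(h)}\,(B\natural A)$, not by $\sqrt{K(h)}\,(A\natural B)$. So to close your proof you must either establish $W(A\sharp B)W^{*}\le\sqrt{K(h)}(A!B)$ separately or state the conclusion with the off-diagonal blocks interchanged.
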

\begin{proof}
We have
\[\begin{aligned}
   A\natural B&={{A}^{\frac{1}{2}}}{{B}^{\frac{1}{2}}}{{\left( A\sharp B \right)}^{-1}}{{B}^{\frac{1}{2}}}{{A}^{\frac{1}{2}}} \quad \text{(by \eqref{1})}\\ 
 & \le \sqrt{K\left( h \right)}\left( {{A}^{\frac{1}{2}}}{{B}^{\frac{1}{2}}}{{\left( A\nabla B \right)}^{-1}}{{B}^{\frac{1}{2}}}{{A}^{\frac{1}{2}}} \right)  \quad \text{(by \eqref{lemma2.2._eq1} in Lemma \ref{7})}
\end{aligned}\]
which is equivalent to
\begin{equation*}
\left( {{B}^{-\frac{1}{2}}}{{A}^{-\frac{1}{2}}} \right)\left( A\natural B \right){{\left( {{B}^{-\frac{1}{2}}}{{A}^{-\frac{1}{2}}} \right)}^{*}}\le \sqrt{K\left( h \right)}\left( {{A}^{-1}}!{{B}^{-1}} \right).
\end{equation*}
Utilizing Lemma \ref{3}, we obtain
\begin{equation}\label{0}
\left[ \begin{matrix}
   \sqrt{K\left( h \right)}\left( {{A}^{-1}}!{{B}^{-1}} \right) & {{A}^{-\frac{1}{2}}}{{B}^{-\frac{1}{2}}}  \\
   {{B}^{-\frac{1}{2}}}{{A}^{-\frac{1}{2}}} & {{\left( A\natural B \right)}^{-1}}  \\
\end{matrix} \right]\ge O.
\end{equation}
If we replace $A$ by $A^{-1}$ and $B$ by $B^{-1}$ in \eqref{0}, we get
\[\left[ \begin{matrix}
   \sqrt{K\left( \frac{1}{h} \right)}\left( A!B \right) & {{A}^{\frac{1}{2}}}{{B}^{\frac{1}{2}}}  \\
   {{B}^{\frac{1}{2}}}{{A}^{\frac{1}{2}}} & A\natural B  \\
\end{matrix} \right]\ge O.\]
The proof is completed by noting that $K\left( \frac{1}{h} \right)=K\left( h \right)$.
\end{proof}

 \begin{remark}
 We know that for $mI\le A,B\le MI$, 
 \begin{equation}\label{15}
  mI\le A\sigma B\le MI,
 \end{equation}
where $\sigma $ is an arbitrary mean. On account of \eqref{15} and \eqref{1}, we see that
 	\[\frac{1}{M}{{\left| {{B}^{\frac{1}{2}}}{{A}^{\frac{1}{2}}} \right|}^{2}}\le A\natural B\le \frac{1}{m}{{\left| {{B}^{\frac{1}{2}}}{{A}^{\frac{1}{2}}} \right|}^{2}}.\]
  \end{remark}

  The following result presents a possible relationship for the singular values of $A\natural B\oplus A\sharp B$, with those of $A^{\frac{1}{2}}B^{\frac{1}{2}}.$
 \begin{theorem}\label{thm_ned}
 Let $A, B\in \mathcal{M}_n$ be positive definite matrices. Then
 \[{{s}_{j}}\left( {{A}^{\frac{1}{2}}}{{B}^{\frac{1}{2}}} \right)\le {{s}_{j}}\left( A\natural B\oplus A\sharp B \right),\]
 for $j=1,2,\ldots,n $.
 \end{theorem}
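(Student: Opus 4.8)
The plan is to realize both $A\natural B$ and $A\sharp B$ simultaneously as the ``$X$'' in a positive $2\times 2$ operator matrix whose off-diagonal block is $A^{1/2}B^{1/2}$, and then invoke Lemma \ref{5} (the singular value majorization for positive block matrices). Concretely, recall from \eqref{1} that $A\natural B=\left(A^{1/2}B^{1/2}\right)\left(A\sharp B\right)^{-1}\left(A^{1/2}B^{1/2}\right)^{*}$. Writing $C:=A^{1/2}B^{1/2}$ and $Y:=A\sharp B$, this says $A\natural B=CY^{-1}C^{*}$, which by Lemma \ref{3} is precisely equivalent to
\[\left[ \begin{matrix}
   A\natural B & C \\
   C^{*} & A\sharp B
\end{matrix} \right]\ge O,\]
since $C^{*}=B^{1/2}A^{1/2}$ and the Schur complement condition $A\natural B\ge C(A\sharp B)^{-1}C^{*}$ holds with equality. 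Thus the block matrix with diagonal $A\natural B\oplus A\sharp B$ and off-diagonal block $A^{1/2}B^{1/2}$ is positive semidefinite.

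Next I would simply apply Lemma \ref{5} to this block matrix. Lemma \ref{5} states that if $\left[ \begin{smallmatrix} P & Z^{*} \\ Z & Q \end{smallmatrix}\right]\ge O$ then $s_j(Z)\le s_j(P\oplus Q)$ for all $j$. Here we take $P=A\natural B$, $Q=A\sharp B$, and $Z=C^{*}=B^{1/2}A^{1/2}$ (so that $Z^{*}=C=A^{1/2}B^{1/2}$ sits in the upper-right corner, matching the orientation in Lemma \ref{5}). This immediately yields $s_j\!\left(B^{1/2}A^{1/2}\right)\le s_j\!\left((A\natural B)\oplus(A\sharp B)\right)$. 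Finally, since $s_j(X)=s_j(X^{*})$ for any matrix, and $\left(B^{1/2}A^{1/2}\right)^{*}=A^{1/2}B^{1/2}$, we get $s_j\!\left(A^{1/2}B^{1/2}\right)=s_j\!\left(B^{1/2}A^{1/2}\right)$, which completes the proof.

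The only point requiring a little care is getting the block-matrix orientation to match the hypothesis of Lemma \ref{5} exactly (the ``$C$'' in Lemma \ref{5} is in the lower-left corner, with $C^{*}$ upper-right); this is a cosmetic adjustment handled by choosing $Z$ appropriately as above, together with the symmetry $s_j(Z)=s_j(Z^{*})$. There is genuinely no analytic obstacle here: the substantive content is the identity \eqref{1} rewritten through Lemma \ref{3} to produce the positive block matrix, and then Lemma \ref{5} does all the work. One could equally start from the already-displayed positivity of $\left[\begin{smallmatrix} A\natural B & A^{1/2}B^{1/2} \\ B^{1/2}A^{1/2} & A\nabla B\end{smallmatrix}\right]$ in \eqref{21}, but that would only give a bound involving $A\nabla B$ rather than the sharper $A\sharp B$, so using the exact Riccati-type identity \eqref{1} with $A\sharp B$ in the corner is what makes the statement with $A\sharp B$ come out.
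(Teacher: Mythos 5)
Your proposal is correct and follows essentially the same route as the paper: both use the identity \eqref{1} together with Lemma \ref{3} to obtain the positive block matrix $\left[\begin{smallmatrix} A\natural B & A^{\frac12}B^{\frac12} \\ B^{\frac12}A^{\frac12} & A\sharp B\end{smallmatrix}\right]\ge O$, and then apply Lemma \ref{5}. Your extra care about the block orientation and the fact that $s_j(Z)=s_j(Z^{*})$ is a harmless refinement of the same argument.
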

 \begin{proof}
 From \eqref{1} and Lemma \ref{3}, we obtain 
 \begin{equation}\label{4}
 \left[ \begin{matrix}
    A\natural B & {{A}^{\frac{1}{2}}}{{B}^{\frac{1}{2}}}  \\
    {{B}^{\frac{1}{2}}}{{A}^{\frac{1}{2}}} & A\sharp B  \\
 \end{matrix} \right]\ge O.
 \end{equation}
 The inequality \eqref{4}, together with Lemma \ref{5}, ensures the desired result.
 \end{proof}
 
 \section{Weighted versions}
 In studying operator means, discussing weighted means has been of interest. For example, the weighted operator geometric mean of two positive operators is defined by
 \[A\sharp_tB=A^{\frac{1}{2}}\left(A^{-\frac{1}{2}}BA^{-\frac{1}{2}}\right)^tA^{\frac{1}{2}}, \quad\left(0\leq t\leq 1\right).\]
 
 It was pointed out in \cite{1} that a possible weighted version of \eqref{11} can be described by
	\[A{{\natural }_{t}}B={{A}^{\frac{1}{2}}}{{\left( {{B}^{\frac{1}{2}}}{{A}^{-1}}{{B}^{\frac{1}{2}}} \right)}^{t}}{{A}^{\frac{1}{2}}},\quad \left(0\le t\le 1\right).\]
	In this section, we further discuss possible relations of $A\natural_tB$. Firstly, we list some properties of $A\natural_t B$ with $A\sharp_tB$.
	\begin{proposition}\label{prop_list}
	Let $A,B\in \mathcal B\left( \mathcal H \right)$ be  positive operators and $0\le t\le 1$. Then
	\begin{itemize}
	\item[(i)] $A\natural_t B = A^{1-t}B^t$ if $A$ and $B$ commute.
	\item[(ii)] $\left(\alpha A\right)\natural_t \left(\beta B\right)=\alpha^{1-t}\beta^{t}\left(A\natural_t B\right)$ for any $\alpha,\beta >0$.
	\item[(iii)] $\left(U^*AU\right)\natural_t \left(U^*BU\right)=U^*\left(A\natural_t B\right)U$ for any unitary $U$.
	\item[(iv)] $A^{-1}\natural_t B^{-1}=\left(A\natural_tB\right)^{-1}$.
	\item[(v)] $A\natural_t B=A\sharp_t \left|A^{-\frac12}B^{\frac12}A^{\frac12}\right|^2=\left(A\natural_0B\right)\sharp_t\left(A\natural_1B\right)$.
	\item[(vi)] $A\natural_t B=\left(A\natural B\right)\left(A\natural_{1-t} B\right)^{-1}\left(A\natural B\right)$.
	\item[(vii)] $\left(A\natural_sB\right)\sharp_t\left(A\natural_u B\right)=A\natural_{(1-t)s+tu}B$ for $0\le s,t,u \le 1$.
	\end{itemize}
	\end{proposition}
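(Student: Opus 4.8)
The plan is to prove the seven items in an order that exposes the structural identity (v) first, since it drives (vii), and to dispatch (i)--(iv) and (vi) by direct functional-calculus computations. For (i), note that $B^{1/2}A^{-1}B^{1/2}=A^{-1}B$ when $A$ and $B$ commute, so $A\natural_tB=A^{1/2}A^{-t}B^tA^{1/2}=A^{1-t}B^t$. For (ii), use $(\alpha A)^{1/2}=\alpha^{1/2}A^{1/2}$ and factor the scalars out of the $t$-th power of $(\beta B)^{1/2}(\alpha A)^{-1}(\beta B)^{1/2}=\tfrac{\beta}{\alpha}\,B^{1/2}A^{-1}B^{1/2}$; the exponents bookkeep to $\alpha^{1-t}\beta^t$. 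For (iii), use that $(U^*XU)^r=U^*X^rU$ for every positive $X$ and real power $r$ (conjugate spectral decompositions), applied to both the square roots and the $t$-th power; the intermediate $UU^*$ factors telescope to give $U^*(A\natural_tB)U$.

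Item (iv) is the weighted analogue of Proposition \ref{20}: since $(B^{1/2}A^{-1}B^{1/2})^{-1}=B^{-1/2}AB^{-1/2}$, raising to the power $-t$ and conjugating by $A^{-1/2}$ turns $(A\natural_tB)^{-1}=A^{-1/2}(B^{1/2}A^{-1}B^{1/2})^{-t}A^{-1/2}$ into exactly $A^{-1}\natural_tB^{-1}$. Item (v) is the weighted version of Lemma \ref{24}. Writing $C:=B^{1/2}A^{-1}B^{1/2}$, the core step is $|A^{-1/2}B^{1/2}A^{1/2}|^2=A^{1/2}B^{1/2}A^{-1}B^{1/2}A^{1/2}=A^{1/2}CA^{1/2}$, hence $A^{-1/2}|A^{-1/2}B^{1/2}A^{1/2}|^2A^{-1/2}=C$; substituting this into the definition of $\sharp_t$ gives $A\sharp_t|A^{-1/2}B^{1/2}A^{1/2}|^2=A^{1/2}C^tA^{1/2}=A\natural_tB$. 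The second equality in (v) is then immediate once one records $A\natural_0B=A$ and $A\natural_1B=A^{1/2}CA^{1/2}=|A^{-1/2}B^{1/2}A^{1/2}|^2$.

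Item (vi) is a short bookkeeping argument: with $C=B^{1/2}A^{-1}B^{1/2}$, every $A\natural_rB$ equals $A^{1/2}C^rA^{1/2}$, so $(A\natural B)(A\natural_{1-t}B)^{-1}(A\natural B)=A^{1/2}C^{1/2}\,C^{-(1-t)}\,C^{1/2}A^{1/2}=A^{1/2}C^tA^{1/2}=A\natural_tB$. For (vii) I would first reduce, via (v), to the interpolation identity for the weighted operator geometric mean,
\[(A\sharp_sB)\sharp_t(A\sharp_uB)=A\sharp_{(1-t)s+tu}B,\qquad 0\le s,t,u\le 1,\]
applied with $B$ replaced by $C':=|A^{-1/2}B^{1/2}A^{1/2}|^2$, because the left-hand side of (vii) equals $(A\sharp_sC')\sharp_t(A\sharp_uC')$ and its right-hand side equals $A\sharp_{(1-t)s+tu}C'$.

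To prove that interpolation identity I would invoke the congruence invariance $Z^*(X\sharp_tY)Z=(Z^*XZ)\sharp_t(Z^*YZ)$, valid for invertible $Z$ and any Kubo--Ando mean (here with $Z=A^{1/2}$; cf. the identity recalled in Example \ref{ex_P4}): setting $D=A^{-1/2}BA^{-1/2}$ one has $A\sharp_rB=A^{1/2}D^rA^{1/2}$, so the left side becomes $A^{1/2}(D^s\sharp_tD^u)A^{1/2}$, and since $D^s$ and $D^u$ commute, $D^s\sharp_tD^u=(D^s)^{1-t}(D^u)^t=D^{(1-t)s+tu}$, which yields the claim. The only point requiring attention is that $(1-t)s+tu$ lies in $[0,1]$, being a convex combination of $s,u\in[0,1]$, so all the weighted means in sight are legitimate. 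I expect (vii), and in particular isolating this interpolation identity cleanly, to be the main obstacle, with everything else reducing to the computations above.
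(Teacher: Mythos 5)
Your proof is correct and follows essentially the same route as the paper: the key step in both is the identity (v), $A\natural_t B=A\sharp_t\left|A^{-\frac12}B^{\frac12}A^{\frac12}\right|^2$, obtained by the same conjugation computation, with (vii) then reduced to the interpolation law $\left(A\sharp_sB\right)\sharp_t\left(A\sharp_uB\right)=A\sharp_{(1-t)s+tu}B$. The only differences are minor: you prove (iv) and (vi) by direct functional-calculus computations with $C=B^{\frac12}A^{-1}B^{\frac12}$ (in the spirit of Proposition \ref{20}) rather than deducing (iv) from (v), and you give a self-contained proof of the interpolation law via congruence invariance and commutativity of $D^s,D^u$, where the paper simply cites \cite[Lemma 2.1]{LL2013}.
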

	\begin{proof}
	(i) and (ii) are trivial. Since it is known that $f(U^*AU)=U^*f(A)U$ for unitary $U$, normal $A$ and every continuous function $f$ defined on the spectrum of $A$ \cite[Lemma 2.5.1]{H2010}, (iii) follows.
	(v) can be shown by the following calculation.
	$$
	\left(A\natural_0B\right)\sharp_t\left(A\natural_1B\right)=A\sharp_t\left|A^{-\frac12}B^{\frac12}A^{\frac12}\right|^2=A^{\frac12}\left(A^{-\frac12}\left(A^{\frac12}B^{\frac12}A^{-1}B^{\frac12}A^{\frac12}\right)A^{-\frac12}\right)^tA^{\frac12}=A\natural_t B.
	$$
	(iv) follows from the following calculation by (v):
	$$
	A^{-1}\natural_tB^{-1}=A^{-1}\sharp_t \left|A^{-\frac12}B^{\frac12}A^{\frac12}\right|^{-2}=
	\left(A\sharp_t \left|A^{-\frac12}B^{\frac12}A^{\frac12}\right|^{2}\right)^{-1}=\left(A\natural_t B\right)^{-1}.
	$$
	(vi) is also proven by the following calculation.
	\begin{eqnarray*}
	&& A\natural_t B={{A}^{\frac{1}{2}}}{{\left( {{B}^{\frac{1}{2}}}{{A}^{-1}}{{B}^{\frac{1}{2}}} \right)}^{t}}{{A}^{\frac{1}{2}}} \\
	&& ={{A}^{\frac{1}{2}}}{{\left( {{B}^{\frac{1}{2}}}{{A}^{-1}}{{B}^{\frac{1}{2}}} \right)}^{\frac12}}{{A}^{\frac{1}{2}}}
	A^{-\frac12}\left(B^{\frac12}A^{-1}B^{\frac12}\right)^{t-1}A^{-\frac12}
	{{A}^{\frac{1}{2}}}{{\left( {{B}^{\frac{1}{2}}}{{A}^{-1}}{{B}^{\frac{1}{2}}} \right)}^{\frac12}}{{A}^{\frac{1}{2}}}\\
	&&=\left(A\natural B\right)A^{-\frac12}\left(B^{-\frac12}AB^{-\frac12}\right)^{1-t}A^{-\frac12}\left(A\natural B\right)\\
	&&=\left(A\natural B\right)\left(A^{-1}\natural_{1-t}B^{-1}\right)\left(A\natural B\right)
	=\left(A\natural B\right)\left(A\natural_{1-t} B\right)^{-1}\left(A\natural B\right).
	\end{eqnarray*}
	We finally show the following by using (v) repeatedly
	\begin{eqnarray*}
	&& \left(A\natural_sB\right)\sharp_t\left(A\natural_u B\right)=\left\{\left(A\natural_0B\right)\sharp_s\left(A\natural_1B\right)\right\}\sharp_t
		\left\{\left(A\natural_0B\right)\sharp_u\left(A\natural_1B\right)\right\}\\
		&&=\left(A\natural_0B\right)\sharp_{(1-t)s+tu}\left(A\natural_1B\right)=A\natural_{(1-t)s+tu}B,
	\end{eqnarray*}
			since we have 
			$\left(A\sharp_s B\right)\sharp_t\left(A\sharp_u B\right)=A\sharp_{(1-t)s+tu}B$ for 
			$0\le s,t,u\le 1$ \cite[Lemma 2.1]{LL2013}. This completes the proof.
	\end{proof}
	
	Note that (v)  is a special case of (vii) in Proposition \ref{prop_list}.
	
\begin{theorem}\label{12}
Let $A,B\in \mathcal B\left( \mathcal H \right)$ be  positive operators and $0\le t\le 1$. Then, $\left(A\sharp_{1-t} B\right)^{-1}$ is the unique positive solution of the equation
	\[{{A}^{\frac{1}{2}}}{{B}^{\frac{1}{2}}}\mathbb X{{\left( {{A}^{\frac{1}{2}}}{{B}^{\frac{1}{2}}} \right)}^{*}}=A{{\natural }_{t}}B.\]
\end{theorem}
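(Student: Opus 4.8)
The plan is to reduce the claimed Riccati-type equation to the already-established identity \eqref{1}. Recall from \eqref{1} that $\left(A^{\frac12}B^{\frac12}\right)X\left(A^{\frac12}B^{\frac12}\right)^* = A\natural B$ with $X = (A\sharp B)^{-1}$; the present theorem is the weighted analogue, so I expect the solution to be $\mathbb X = \left(A\sharp_{1-t}B\right)^{-1}$ precisely because the middle factor should be the inverse of the weighted geometric mean that ``balances out'' the outer factors $A^{\frac12}B^{\frac12}$ and $\left(A^{\frac12}B^{\frac12}\right)^*$.

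First I would verify that $\mathbb X = \left(A\sharp_{1-t}B\right)^{-1}$ actually solves the equation. By property ($\bf{P_2}$)-type behavior of the weighted mean, $\left(A\sharp_{1-t}B\right)^{-1} = A^{-1}\sharp_{1-t}B^{-1} = A^{-\frac12}\left(A^{\frac12}B^{-1}A^{\frac12}\right)^{1-t}A^{-\frac12}$. Then I would compute
\[
A^{\frac12}B^{\frac12}\left(A^{-1}\sharp_{1-t}B^{-1}\right)B^{\frac12}A^{\frac12}
= A^{\frac12}B^{\frac12}A^{-\frac12}\left(A^{\frac12}B^{-1}A^{\frac12}\right)^{1-t}A^{-\frac12}B^{\frac12}A^{\frac12}.
\]
Setting $T = A^{-\frac12}B^{\frac12}A^{\frac12}$, one has $T^*T = A^{\frac12}B^{\frac12}A^{-1}B^{\frac12}A^{\frac12}$ and $TT^* = A^{\frac12}B^{-1}A^{\frac12}$ is invertible. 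Using the standard identity $T\,g(T^*T) = g(TT^*)\,T$ for continuous $g$ (here applied via $T\,(TT^*)^{s} = (TT^*)^{s}\,T$ after writing everything through the polar-type decomposition, noting $A^{\frac12}B^{\frac12}A^{-\frac12} = \left(A^{-\frac12}B^{\frac12}A^{\frac12}\right)^*$), the right-hand side collapses to $A^{\frac12}\left(B^{\frac12}A^{-1}B^{\frac12}\right)^{t}A^{\frac12} = A\natural_t B$. Alternatively — and more cleanly — I would observe that the equation $A^{\frac12}B^{\frac12}\mathbb X \left(A^{\frac12}B^{\frac12}\right)^* = A\natural_t B$ has, by the invertibility of $A^{\frac12}B^{\frac12}$, the unique positive solution $\mathbb X = \left(A^{\frac12}B^{\frac12}\right)^{-1}\left(A\natural_t B\right)\left(A^{\frac12}B^{\frac12}\right)^{-*} = \left(B^{\frac12}A^{\frac12}\right)^{-*}\cdots$, and then it suffices to identify this with $\left(A\sharp_{1-t}B\right)^{-1}$.

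For the uniqueness part, the key point is that $A^{\frac12}B^{\frac12}$ is an invertible operator, so the map $\mathbb X \mapsto A^{\frac12}B^{\frac12}\mathbb X\left(A^{\frac12}B^{\frac12}\right)^*$ is a bijection on $\mathcal B(\mathcal H)$ that preserves positivity in both directions; hence there is at most one positive $\mathbb X$ satisfying the equation, and existence plus the explicit form above pin it down. So the skeleton is: (1) note $S := A^{\frac12}B^{\frac12}$ is invertible; (2) conclude the equation $S\mathbb X S^* = A\natural_t B$ has the unique positive solution $\mathbb X = S^{-1}(A\natural_t B)S^{-*}$; (3) compute, using Proposition \ref{prop_list}(iv) and the functional-calculus intertwining $T\,f(T^*T)=f(TT^*)\,T$ with $T = A^{-\frac12}B^{\frac12}A^{\frac12}$, that $S^{-1}(A\natural_t B)S^{-*} = A^{-1}\sharp_{1-t}B^{-1} = \left(A\sharp_{1-t}B\right)^{-1}$.

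The main obstacle will be step (3): the bookkeeping with half-powers of $A$ and $B$ interleaved with a fractional power of $B^{\frac12}A^{-1}B^{\frac12}$. The cleanest route is to avoid brute force by consistently using the substitution $T = A^{-\frac12}B^{\frac12}A^{\frac12}$, so that $A\natural_t B = A^{\frac12}(T^*T)^{t/?}\cdots$ — more precisely $A\natural_t B = A^{\frac12}\left(B^{\frac12}A^{-1}B^{\frac12}\right)^t A^{\frac12}$, and then repeatedly apply $T(T^*T)^r = (TT^*)^r T$ to shuffle the fractional power past the $B^{\frac12}$ factors, reducing everything to a power of $A^{\frac12}B^{-1}A^{\frac12} = TT^*$; I would double-check the exponent arithmetic so that the leftover power is exactly $(1-t)$, matching $A\sharp_{1-t}B$. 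Note also that this recovers Theorem \ref{6} as the case $t=\frac12$, since $A\sharp_{1/2}B = A\sharp B$, which is a useful sanity check.
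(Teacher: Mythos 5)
Your skeleton (note that $S=A^{\frac12}B^{\frac12}$ is invertible, so $S\mathbb XS^*=A\natural_tB$ has the unique positive solution $\mathbb X=S^{-1}(A\natural_tB)S^{-*}$, then identify this operator) is essentially the paper's own proof, and it is correct. You do overestimate the work in step (3): no intertwining identity and no exponent shuffling are needed, because the outer factors cancel on the nose,
\[
S^{-1}(A\natural_tB)S^{-*}=B^{-\frac12}A^{-\frac12}\,A^{\frac12}\bigl(B^{\frac12}A^{-1}B^{\frac12}\bigr)^{t}A^{\frac12}\,A^{-\frac12}B^{-\frac12}=B^{-\frac12}\bigl(B^{\frac12}A^{-1}B^{\frac12}\bigr)^{t}B^{-\frac12},
\]
which is, by the very definition of the weighted geometric mean, $B^{-1}\sharp_tA^{-1}=(B\sharp_tA)^{-1}=(A\sharp_{1-t}B)^{-1}$; this is precisely how the paper concludes. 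One slip in the verification branch you sketched and then abandoned: with $T=A^{-\frac12}B^{\frac12}A^{\frac12}$ one has $TT^*=A^{-\frac12}B^{\frac12}AB^{\frac12}A^{-\frac12}$, not $A^{\frac12}B^{-1}A^{\frac12}$. If you want that direct-check route, take instead $W=A^{-\frac12}B^{\frac12}$, so that $A^{\frac12}B^{-1}A^{\frac12}=(WW^*)^{-1}$ and $W^*(WW^*)^{t-1}W=(W^*W)^{t}=\bigl(B^{\frac12}A^{-1}B^{\frac12}\bigr)^{t}$, which completes that computation as well. With these small repairs the argument stands as written.
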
	
\begin{proof}
We have, for such $\mathbb{X}$,
	\[\begin{aligned}
   \mathbb X&={{B}^{-\frac{1}{2}}}{{A}^{-\frac{1}{2}}}{{A}^{\frac{1}{2}}}{{\left( {{B}^{\frac{1}{2}}}{{A}^{-1}}{{B}^{\frac{1}{2}}} \right)}^{t}}{{A}^{\frac{1}{2}}}{{A}^{-\frac{1}{2}}}{{B}^{-\frac{1}{2}}} \\ 
 & ={{B}^{-\frac{1}{2}}}{{\left( {{B}^{\frac{1}{2}}}{{A}^{-1}}{{B}^{\frac{1}{2}}} \right)}^{t}}{{B}^{-\frac{1}{2}}} ={{B}^{-1}}{{\sharp }_{t}}{{A}^{-1}}  ={{\left( B{{\sharp}_{t}}A \right)}^{-1}}  ={{\left( A{{\sharp}_{1-t}}B \right)}^{-1}},  
\end{aligned}\]
as desired.
\end{proof}
Note that we also have immediately
$$
A\natural_tB={{A}^{\frac{1}{2}}}{{B}^{\frac{1}{2}}}\left(A\sharp_{1-t}B\right)^{-1}{{\left( {{A}^{\frac{1}{2}}}{{B}^{\frac{1}{2}}} \right)}^{*}} 
$$
from \cite[Lemma 1]{Furuta1995}.

Although the following result can be stated for $\mathcal{B}(\mathcal{H})$ and ideals associated with unitarily operator norms, we limit ourselves to $\mathcal{M}_n$.
\begin{corollary}\label{cor3.1}
 Let $A,B\in  \mathcal{M}_n$ be  positive definite matrices and $0\le t\le 1$. Then for any unitary invariant norm $\left\| \cdot \right\|_u$ on $\mathcal{M}_n$,
\[\left\| {{A}^{\frac{1}{2}}}{{B}^{\frac{1}{2}}} \right\|_u^2\le {{\left\| A{{\natural }_{t}}B \right\|}_u}{{\left\| A{{\sharp }_{1-t}}B \right\|}_u}.\]
\end{corollary}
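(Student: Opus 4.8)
The plan is to produce a positive semidefinite $2\times 2$ block matrix whose off--diagonal corner is $A^{\frac12}B^{\frac12}$ and whose diagonal entries are $A\natural_t B$ and $A\sharp_{1-t}B$, and then to read off the norm inequality from the standard bound for the off--diagonal block of a positive semidefinite operator matrix.

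The starting point is the identity recorded just after Theorem~\ref{12}, namely $A\natural_t B = A^{\frac12}B^{\frac12}\left(A\sharp_{1-t}B\right)^{-1}\left(A^{\frac12}B^{\frac12}\right)^{*}$. Applying Lemma~\ref{3} with $G=A\natural_t B$, $K=A^{\frac12}B^{\frac12}$, $H=A\sharp_{1-t}B$ (the displayed equality gives $G=KH^{-1}K^{*}$, hence a fortiori $G\ge KH^{-1}K^{*}$), one obtains
$$\left[ \begin{matrix} A\natural_t B & A^{\frac12}B^{\frac12} \\ B^{\frac12}A^{\frac12} & A\sharp_{1-t}B \end{matrix} \right]\ge O,$$
which is the weighted analogue of \eqref{4}.

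It then remains to invoke the classical fact that, whenever $\left[\begin{matrix}P & X \\ X^{*} & Q\end{matrix}\right]\ge O$ with $P,Q\ge O$, one has $\|X\|_u^{2}\le\|P\|_u\,\|Q\|_u$ for every unitarily invariant norm; applied with $P=A\natural_t B$, $X=A^{\frac12}B^{\frac12}$, $Q=A\sharp_{1-t}B$ this gives exactly the claimed inequality. That fact is equivalent to the Cauchy--Schwarz inequality for unitarily invariant norms, and I would prove it quickly (if a self-contained argument is wanted) by writing $X=P^{\frac12}KQ^{\frac12}$ with $\|K\|\le1$ (Smul'jan's parametrisation of positive semidefinite block matrices) and combining $X=(P^{\frac12})^{*}(KQ^{\frac12})$ with $(P^{\frac12})^{*}P^{\frac12}=P$ and the estimate $(KQ^{\frac12})^{*}(KQ^{\frac12})=Q^{\frac12}K^{*}KQ^{\frac12}\le Q$, so that $\|(KQ^{\frac12})^{*}(KQ^{\frac12})\|_u\le\|Q\|_u$. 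The only step not internal to the paper is this Cauchy--Schwarz / block--matrix norm bound, and it is really the only obstacle worth naming; everything else is bookkeeping around the identity above, which is legitimate precisely because $A,B$ are positive \emph{definite}, so that $A^{\frac12}B^{\frac12}$ and $A\sharp_{1-t}B$ are invertible. (An equivalent route avoiding Lemma~\ref{3}: from $SW^{-1}S^{*}=A\natural_t B$ with $S=A^{\frac12}B^{\frac12}$, $W=A\sharp_{1-t}B$, the polar decomposition of $SW^{-\frac12}$ yields $S=(A\natural_t B)^{\frac12}UW^{\frac12}=X^{*}Y$ with $X^{*}X=A\natural_t B$ and $Y^{*}Y=A\sharp_{1-t}B$, and then the Cauchy--Schwarz inequality for unitarily invariant norms applies directly.)
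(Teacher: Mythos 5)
Your proposal is correct and follows essentially the same route as the paper: the identity $A\natural_t B=A^{\frac12}B^{\frac12}\left(A\sharp_{1-t}B\right)^{-1}\left(A^{\frac12}B^{\frac12}\right)^{*}$ (Theorem \ref{12}) together with Lemma \ref{3} yields the positivity of the block matrix \eqref{14}, and then the contraction factorization $A^{\frac12}B^{\frac12}=\left(A\natural_t B\right)^{\frac12}K\left(A\sharp_{1-t}B\right)^{\frac12}$ with the Cauchy--Schwarz inequality for unitarily invariant norms gives the claim. The only difference is cosmetic: you spell out the proof of the norm bound for the off-diagonal block, which the paper simply cites from Bhatia.
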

\begin{proof}
Theorem \ref{12} ensures that
\begin{equation}\label{14}
\left[ \begin{matrix}
   A{{\natural }_{t}}B & {{A}^{\frac{1}{2}}}{{B}^{\frac{1}{2}}}  \\
   {{\left( {{A}^{\frac{1}{2}}}{{B}^{\frac{1}{2}}} \right)}^{*}} & A{{\sharp}_{1-t}}B  \\
\end{matrix} \right]\ge O.
\end{equation}
Consequently, for some contraction $K$ (see \cite[Proposition 1.3.2]{2}) such that,
\[{{A}^{\frac{1}{2}}}{{B}^{\frac{1}{2}}}={{\left( A{{\natural}_{t}}B \right)}^{\frac{1}{2}}}K{{\left( A{{\sharp }_{1-t}}B \right)}^{\frac{1}{2}}}.\]
Therefore, for any unitary invariant norm $\left\| \cdot \right\|_u$,
\[\left\| {{A}^{\frac{1}{2}}}{{B}^{\frac{1}{2}}} \right\|_u\le {{\left\| A{{\natural }_{t}}B \right\|}_u^{\frac{1}{2}}}{{\left\| A{{\sharp }_{1-t}}B \right\|}_u^{\frac{1}{2}}},\]
as desired.
\end{proof}
 
\begin{remark}
 By the similar way to Corollary \ref{cor3.1} with Proposition \ref{prop_list} (vi) and $\left(A\natural B\right)^*=A\natural B\ge O$,
  we have the unitarily invariant norm inequality:
 $$
 \left\| A\natural B \right\|_u\le \left\| A\natural_t B \right\|_u^{\frac12}\left\| A\natural_{1-t} B \right\|_u^{\frac12},\quad (0\le t \le 1).
 $$
 \end{remark}

Another singular value inequality that complements Theorem \ref{thm_ned} can be stated as follows.
\begin{theorem}\label{13}
Let $A, B, C\in\mathcal{M}_n$ be positive definite matrices  such that $C\ge A+B$ and let $0\le t\le 1$. Then for all $j=1,2,\ldots ,n,$
	\[{{\lambda}_{j}}\left( C+{{A}^{\frac{1}{2}}}{{B}^{\frac{1}{2}}}+{{B}^{\frac{1}{2}}}{{A}^{\frac{1}{2}}} \right)\le {{\lambda}_{j}}\left( C+A{{\sharp}_{1-t}}B+A{{\natural}_{t}}B \right).\]
\end{theorem}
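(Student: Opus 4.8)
The plan is to read off the needed operator inequality from the $2\times 2$ block positivity that was already established in the proof of Corollary~\ref{cor3.1}, and then to finish with Weyl's monotonicity principle for eigenvalues of Hermitian matrices. No auxiliary construction beyond \eqref{14} is required.

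First I would invoke \eqref{14} (equivalently, Theorem~\ref{12} together with Lemma~\ref{3}), which asserts that the $2n\times 2n$ matrix
\[
M:=\left[ \begin{matrix}
A\natural_t B & A^{\frac12}B^{\frac12} \\
\left(A^{\frac12}B^{\frac12}\right)^* & A\sharp_{1-t}B
\end{matrix} \right]\ge O .
\]
Compressing $M$ by the $2n\times n$ block column $V=\left[\begin{smallmatrix} I \\ -I\end{smallmatrix}\right]$ gives $V^*MV\ge O$, and a direct multiplication, using $\left(A^{\frac12}B^{\frac12}\right)^*=B^{\frac12}A^{\frac12}$, shows
\[
V^*MV=A\natural_t B+A\sharp_{1-t}B-A^{\frac12}B^{\frac12}-B^{\frac12}A^{\frac12}\ \ge\ O .
\]
Thus $A^{\frac12}B^{\frac12}+B^{\frac12}A^{\frac12}\le A\sharp_{1-t}B+A\natural_t B$, an inequality between Hermitian matrices in $\mathcal M_n$. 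Adding the Hermitian matrix $C$ to both sides yields
\[
C+A^{\frac12}B^{\frac12}+B^{\frac12}A^{\frac12}\ \le\ C+A\sharp_{1-t}B+A\natural_t B,
\]
and applying Weyl's monotonicity principle (if $S\le T$ are Hermitian then $\lambda_j(S)\le\lambda_j(T)$ for every $j$, eigenvalues arranged in decreasing order) gives exactly the claim for $j=1,\dots,n$.

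There is essentially no obstacle in this route: the only step requiring a moment's thought is recognizing that \eqref{14}, once compressed along $\left[\begin{smallmatrix} I \\ -I\end{smallmatrix}\right]$, already encodes the operator inequality $A^{\frac12}B^{\frac12}+B^{\frac12}A^{\frac12}\le A\sharp_{1-t}B+A\natural_t B$; after that Weyl's principle does everything. I would also note, for honesty, that the argument never uses the hypothesis $C\ge A+B$: the conclusion holds for an arbitrary Hermitian $C\in\mathcal M_n$, so this assumption is kept only for parallelism with the companion singular value statements and could be dropped.
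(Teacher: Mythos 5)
Your proof is correct. It shares with the paper the one essential ingredient, namely the block positivity \eqref{14} coming from Theorem \ref{12} and Lemma \ref{3}, but the way you extract the eigenvalue inequality is different and in fact cleaner. The paper keeps the argument at the level of $2n\times 2n$ matrices: it adds to $M$ the companion matrix $N=\left[\begin{smallmatrix} A\sharp_{1-t}B & B^{1/2}A^{1/2}\\ A^{1/2}B^{1/2} & A\natural_t B\end{smallmatrix}\right]\ge O$ and the block $\left[\begin{smallmatrix} C & C\\ C & C\end{smallmatrix}\right]\ge O$ to form a positive matrix $T$ whose equal diagonal blocks are $C+A\sharp_{1-t}B+A\natural_t B$ and whose (Hermitian) off-diagonal block is $C+A^{1/2}B^{1/2}+B^{1/2}A^{1/2}$, using the hypothesis $C\ge A+B$ to note that this off-diagonal block is positive definite (so eigenvalues and singular values agree), and then reads the eigenvalue inequality off the positivity of $T$. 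You instead compress $M$ alone by $\left[\begin{smallmatrix} I\\ -I\end{smallmatrix}\right]$ to get the Loewner inequality $A^{1/2}B^{1/2}+B^{1/2}A^{1/2}\le A\sharp_{1-t}B+A\natural_t B$ at the $n\times n$ level, add $C$, and invoke Weyl monotonicity. This buys two things: the final step is completely explicit (the paper's concluding inference from $T\ge O$ is left implicit, and if one tried to finish it via the singular value Lemma \ref{5} one would only get $s_j$ of the off-diagonal block bounded by $s_{\lceil j/2\rceil}$ of the diagonal block, so some version of your compression-plus-Weyl step is needed there too), and, as you correctly observe, the hypothesis $C\ge A+B$ becomes superfluous for this particular statement — any Hermitian $C$ works — whereas in the paper it only serves to make the off-diagonal block positive so that eigenvalues coincide with singular values.
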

\begin{proof}
Since $C\ge A+B$, we have 
	\[\begin{aligned}
   C+{{A}^{\frac{1}{2}}}{{B}^{\frac{1}{2}}}+{{B}^{\frac{1}{2}}}{{A}^{\frac{1}{2}}}&\ge A+B+{{A}^{\frac{1}{2}}}{{B}^{\frac{1}{2}}}+{{B}^{\frac{1}{2}}}{{A}^{\frac{1}{2}}} \\ 
 & = {{\left( {{A}^{\frac{1}{2}}}+{{B}^{\frac{1}{2}}} \right)}^{2}} \\ 
 & > O.  
\end{aligned}\]
This shows that $C+{{A}^{\frac{1}{2}}}{{B}^{\frac{1}{2}}}+{{B}^{\frac{1}{2}}}{{A}^{\frac{1}{2}}}> O,$ hence its eigenvalues are the same as its singular values. Also $C+A{{\sharp}_{1-t}}B+A{{\natural}_{t}}B>O$.
From \eqref{14}, we know that
	\[M=\left[ \begin{matrix}
   A{{\natural}_{t}}B & {{A}^{\frac{1}{2}}}{{B}^{\frac{1}{2}}}  \\
   {{B}^{\frac{1}{2}}}{{A}^{\frac{1}{2}}} & A{{\sharp}_{1-t}}B  \\
\end{matrix} \right]\ge O\;\text{ and }\;N=\left[ \begin{matrix}
   A{{\sharp}_{1-t}}B & {{B}^{\frac{1}{2}}}{{A}^{\frac{1}{2}}}  \\
   {{A}^{\frac{1}{2}}}{{B}^{\frac{1}{2}}} & A{{\natural}_{t}}B  \\
\end{matrix} \right]\ge O.\]
Hence 
	\[T=\left[ \begin{matrix}
   C+A{{\natural}_{t}}B+A{{\sharp}_{1-t}}B & C+{{A}^{\frac{1}{2}}}{{B}^{\frac{1}{2}}}+{{B}^{\frac{1}{2}}}{{A}^{\frac{1}{2}}}  \\
   C+{{B}^{\frac{1}{2}}}{{A}^{\frac{1}{2}}}+{{A}^{\frac{1}{2}}}{{B}^{\frac{1}{2}}} & C+A{{\sharp}_{1-t}}B+A{{\natural}_{t}}B  \\
\end{matrix} \right]\ge O.\]
Thus we obtain
	\[{{\lambda }_{j}}\left( C+{{A}^{\frac{1}{2}}}{{B}^{\frac{1}{2}}}+{{B}^{\frac{1}{2}}}{{A}^{\frac{1}{2}}} \right)\le {{\lambda }_{j}}\left( C+A{{\sharp}_{1-t}}B+A{{\natural}_{t}}B \right).\]

\end{proof}

\begin{corollary}
Let $A, B\in\mathcal{M}_n$ be positive definite matrices, and let $0\le t\le 1$. Then for any unitarily invariant norm ${{\left\| \cdot \right\|}_{u}}$ on $\mathcal{M}_n$,
	\[\left\| {{\left( {{A}^{\frac{1}{2}}}+{{B}^{\frac{1}{2}}} \right)}^{2}} \right\|_{u}\le \left\| A+B+A{{\sharp}_{1-t}}B+A{{\natural }_{t}}B \right\|_{u}.\]
\end{corollary}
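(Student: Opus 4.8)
The plan is to obtain this as an immediate specialization of Theorem \ref{13}, followed by an appeal to the Fan dominance principle. First I would apply Theorem \ref{13} with the particular choice $C=A+B$, which trivially satisfies $C\ge A+B$ and is positive definite. With this choice one has, exactly as in the computation inside the proof of Theorem \ref{13},
\[C+{{A}^{\frac{1}{2}}}{{B}^{\frac{1}{2}}}+{{B}^{\frac{1}{2}}}{{A}^{\frac{1}{2}}}=A+B+{{A}^{\frac{1}{2}}}{{B}^{\frac{1}{2}}}+{{B}^{\frac{1}{2}}}{{A}^{\frac{1}{2}}}={{\left({{A}^{\frac{1}{2}}}+{{B}^{\frac{1}{2}}}\right)}^{2}}.\]
Hence Theorem \ref{13} yields
\[{{\lambda}_{j}}\left({{\left({{A}^{\frac{1}{2}}}+{{B}^{\frac{1}{2}}}\right)}^{2}}\right)\le{{\lambda}_{j}}\left(A+B+A{{\sharp}_{1-t}}B+A{{\natural}_{t}}B\right),\quad j=1,\ldots,n.\]

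Next I would record that both matrices occurring here are positive definite — again this was already verified within the proof of Theorem \ref{13} — so that, with eigenvalues arranged in decreasing order, the eigenvalues of each coincide with its singular values. Writing $X={{\left({{A}^{\frac{1}{2}}}+{{B}^{\frac{1}{2}}}\right)}^{2}}$ and $Y=A+B+A{{\sharp}_{1-t}}B+A{{\natural}_{t}}B$, the displayed inequality therefore reads $s_j(X)\le s_j(Y)$ for every $j$. Summing over $j=1,\ldots,k$ gives $\sum_{j=1}^{k}s_j(X)\le\sum_{j=1}^{k}s_j(Y)$ for all $k$, i.e., the singular value sequence of $Y$ weakly majorizes that of $X$.

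Finally, by the Fan dominance theorem characterizing unitarily invariant norms (weak majorization of singular value sequences is equivalent to domination in every unitarily invariant norm; see \cite{2}), this weak majorization implies ${{\left\|X\right\|}_{u}}\le{{\left\|Y\right\|}_{u}}$ for every unitarily invariant norm ${{\left\|\cdot\right\|}_{u}}$ on $\mathcal{M}_n$, which is the assertion. I do not anticipate any real obstacle: the only step requiring a word of justification is the passage from the per-index eigenvalue inequality of Theorem \ref{13} to the norm inequality, and this is exactly where positive definiteness (allowing one to replace $\lambda_j$ by $s_j$) together with the Fan dominance principle does the work.
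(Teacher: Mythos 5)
Your proposal is correct and is essentially the paper's own proof: it specializes Theorem \ref{13} to $C=A+B$, uses the identity $\left(A^{\frac12}+B^{\frac12}\right)^2=A+B+A^{\frac12}B^{\frac12}+B^{\frac12}A^{\frac12}$, and passes from the entrywise eigenvalue inequality to the unitarily invariant norm inequality. The only difference is that the paper leaves the final eigenvalue-to-norm step implicit, while you spell it out (positive definiteness so that $\lambda_j=s_j$, then Fan dominance), which is a harmless and correct elaboration.
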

\begin{proof}
If we put $C=A+B$ in Theorem \ref{13}, then we can write,
	\[\begin{aligned}
   \left\| {{\left( {{A}^{\frac{1}{2}}}+{{B}^{\frac{1}{2}}} \right)}^{2}} \right\|_{u}&=\left\| A+B+{{A}^{\frac{1}{2}}}{{B}^{\frac{1}{2}}}+{{B}^{\frac{1}{2}}}{{A}^{\frac{1}{2}}} \right\|_{u} \\ 
 & \le \left\| A+B+A{{\sharp}_{1-t}}B+A{{\natural}_{t}}B \right\|_{u}.  
\end{aligned}\]
\end{proof}

We close this section with the following result concerning Lemma \ref{7}.
\begin{proposition}
Let $A,B\in\mathcal{B}(\mathcal{H})$ be positive operators, $0\le t \le 1$ and $0<\alpha <\beta$.
Define $f_t(x):=x^t+x^{-t}$ for $x>0$ and $k_t(\alpha,\beta):=\max\{f_t(\alpha),f_t(\beta)\}$.
If $\alpha A\le B \le \beta A$, then
\begin{equation}\label{prop3.2_eq1}
2A\le A\sharp_t B+A\left(A\sharp_t B\right)^{-1}A\le k_t(\alpha,\beta)A.
\end{equation}
If $\dfrac{1}{\beta} B\le A \le \dfrac{1}{\alpha} B$, then
\begin{equation}\label{prop3.2_eq2}
2A\le A\natural_t B+A\left(A\natural_t B\right)^{-1}A\le k_t(\alpha,\beta)A.
\end{equation}
In particular, if $0<mI \le A,B\le MI$, then we have both
\begin{equation}\label{prop3.2_eq3}
A\le  \frac{A\sharp B+A\left(A\sharp B\right)^{-1}A}{2}\le \sqrt{K(h)} A\quad{\rm and}\quad
A\le  \frac{A\natural B+A\left(A\natural B\right)^{-1}A}{2} \le \sqrt{K(h)} A
\end{equation}
where $K\left( h \right)=\dfrac{(h+1)^2}{4h}$ is defined in Lemma \ref{7} with $h=\dfrac{M}{m}$.
\end{proposition}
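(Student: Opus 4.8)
The plan is to reduce everything to a single scalar inequality via functional calculus, exactly in the spirit of Lemma \ref{7}. First I would record the elementary fact that for any positive invertible operator $Y$ one has $Y + Y^{-1} \ge 2I$ (apply the scalar inequality $x + x^{-1}\ge 2$ to $Y$), and more precisely, if $\alpha I \le Y \le \beta I$ then $Y + Y^{-1} \le \max\{f_t(\alpha^{1/t}),\dots\}$—here I should instead work directly with the operator $Z$ whose ``$t$-th power'' appears. Concretely, for \eqref{prop3.2_eq1} put $Z := A^{-1/2}BA^{-1/2}$; the hypothesis $\alpha A \le B \le \beta A$ is equivalent to $\alpha I \le Z \le \beta I$. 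Then $A^{-1/2}(A\sharp_t B)A^{-1/2} = Z^t$, so $A^{-1/2}\bigl(A\sharp_t B + A(A\sharp_t B)^{-1}A\bigr)A^{-1/2} = Z^t + Z^{-t} = g_t(Z)$ where $g_t(x) = x^t + x^{-t}$. Since $g_t(x) \ge 2$ for all $x>0$ and $g_t$ is convex on $(0,\infty)$ with $g_t = f_t$ in the paper's notation, on the interval $[\alpha,\beta]$ we have $2 \le g_t(x) \le \max\{f_t(\alpha), f_t(\beta)\} = k_t(\alpha,\beta)$; applying the functional calculus to $Z$ and conjugating back by $A^{1/2}$ gives \eqref{prop3.2_eq1}.

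For \eqref{prop3.2_eq2} the mechanism is identical but with the substitution dictated by the definition of $\natural_t$. Set $W := B^{1/2}A^{-1}B^{1/2}$; the hypothesis $\tfrac1\beta B \le A \le \tfrac1\alpha B$ is equivalent to $\alpha I \le W \le \beta I$ (multiply through on both sides by $B^{-1/2}$). Since $A\natural_t B = A^{1/2}W^tA^{1/2}$ by definition, we get $A^{-1/2}(A\natural_t B)A^{-1/2} = A^{-1/2}A^{1/2}W^tA^{1/2}A^{-1/2}$—wait, this conjugation does not simplify directly because $W$ is built from $B^{1/2}$, not from an $A^{-1/2}$-sandwich. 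The cleaner route is to use Proposition \ref{prop_list}(v): $A\natural_t B = A\sharp_t|A^{-1/2}B^{1/2}A^{1/2}|^2$. Writing $V := |A^{-1/2}B^{1/2}A^{1/2}|^2 = A^{1/2}B^{1/2}A^{-1}B^{1/2}A^{1/2}$, one has $A^{-1/2}VA^{-1/2} = B^{1/2}A^{-1}B^{1/2} = W$, so again $A^{-1/2}(A\natural_t B)A^{-1/2} = W^t$, and $A^{-1/2}\bigl(A\natural_t B + A(A\natural_t B)^{-1}A\bigr)A^{-1/2} = W^t + W^{-t} = g_t(W)$. With $\alpha I \le W \le \beta I$ from the hypothesis, the same scalar bounds $2 \le g_t(x) \le k_t(\alpha,\beta)$ on $[\alpha,\beta]$ yield \eqref{prop3.2_eq2} after conjugating back.

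Finally, for \eqref{prop3.2_eq3} I would specialize: if $mI \le A,B \le MI$, then $mI \le B \le MI$ and $\tfrac1M I \le A^{-1} \le \tfrac1m I$ give $\tfrac{m}{M} A \le B \le \tfrac{M}{m} A$, i.e. we may take $\alpha = m/M$, $\beta = M/m = h$ in \eqref{prop3.2_eq1}; likewise $\tfrac1\beta B = \tfrac{m}{M}B \le A$ and $A \le \tfrac{M}{m}B = \tfrac1\alpha B$ hold, so \eqref{prop3.2_eq2} applies with the same $\alpha,\beta$. At $t=1$ we have $k_1(\alpha,\beta) = \max\{\alpha + \alpha^{-1}, \beta + \beta^{-1}\}$; since $\beta = h \ge 1 \ge \alpha = 1/h$ and $x+x^{-1}$ is increasing for $x \ge 1$, this equals $h + h^{-1} = \tfrac{h^2+1}{h}$. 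It remains to observe $\tfrac12\bigl(h + h^{-1}\bigr) = \tfrac{h^2+1}{2h} \le \tfrac{(h+1)^2}{4h} = K(h)$? That is false in general ($K(h) \le \tfrac{h^2+1}{2h}$ actually), so the bound $\sqrt{K(h)}$ claimed in \eqref{prop3.2_eq3} cannot come from $k_1$ directly; instead it must come from combining the two-sided estimate with Lemma \ref{7}, namely $\tfrac12(A\sharp B + A(A\sharp B)^{-1}A) \le \tfrac12(A\nabla B + \dots)$—here one uses that $A\sharp B + A(A\sharp B)^{-1}A \le 2\sqrt{K(h)}(A\sharp B)$ is \emph{not} what \eqref{lemma2.2._eq1} says either. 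The honest approach for \eqref{prop3.2_eq3} is: $A(A\sharp B)^{-1}A = A \sharp (A B^{-1} A)$ and then apply \eqref{lemma2.2._eq1} to the pair $(A, AB^{-1}A)$ together with $\tfrac12(A + AB^{-1}A) \ge A\sharp(AB^{-1}A) = A(A\sharp B)^{-1}A$, or more simply use $g_1(x) = x + x^{-1}$ and note $g_1(x)/2 \le \sqrt{K(h)}$ on $[m/M, M/m]$ reduces to checking $\tfrac12(h + h^{-1}) \le \tfrac{h+1}{2\sqrt h}$, i.e. $h + h^{-1} \le \tfrac{h+1}{\sqrt h}$, i.e. $h^{3/2} + h^{-1/2} \le h + 1$, i.e. $(\sqrt h - 1)(h - \sqrt h) \ge 0$—wait, $h^{3/2} + h^{-1/2} - h - 1 = h^{-1/2}(h^2 + 1 - h^{3/2} - h^{1/2}) = h^{-1/2}(h^{3/2}(\sqrt h - 1) - (\sqrt h - 1)) = h^{-1/2}(\sqrt h - 1)(h^{3/2} - 1) \ge 0$, so the inequality goes the \emph{wrong} way and $g_1/2 \ge \sqrt{K(h)}$; hence the correct reading is that \eqref{prop3.2_eq3} should use Lemma \ref{7} to pass through $A\nabla B$ and $A\nabla|A^{-1/2}B^{1/2}A^{1/2}|^2$ rather than through $k_1$. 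Thus the main obstacle—and the step I expect to need the most care—is organizing the $t=1$ specialization so that the upper bound $\sqrt{K(h)}A$ genuinely follows (via \eqref{lemma2.2._eq1} and \eqref{lemma2.2._eq2} of Lemma \ref{7} applied to the appropriate pair, not via a crude $k_t$ estimate), while the lower bound $A$ is the trivial $Y + Y^{-1} \ge 2I$ fact; everything else is routine functional calculus.
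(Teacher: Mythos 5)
Your treatment of \eqref{prop3.2_eq1} and \eqref{prop3.2_eq2} is correct and is essentially the paper's argument: both reduce, via functional calculus applied to $f_t(x)=x^t+x^{-t}$ with $x:=A^{-1/2}BA^{-1/2}$ (resp.\ $x:=B^{1/2}A^{-1}B^{1/2}$) and conjugation by $A^{1/2}$, to the scalar bounds $2\le f_t(x)\le k_t(\alpha,\beta)$ on $[\alpha,\beta]$. (Your momentary worry in the $\natural_t$ case is unfounded: $A^{-1/2}\left(A\natural_t B\right)A^{-1/2}=\left(B^{1/2}A^{-1}B^{1/2}\right)^t$ follows directly from the definition, so the detour through Proposition \ref{prop_list}(v) is harmless but unnecessary.)

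The genuine gap is in \eqref{prop3.2_eq3}. You specialize at $t=1$, but $A\sharp B$ and $A\natural B$ are the unweighted means, i.e.\ the case $t=\tfrac12$ (note $A\sharp_1 B=B$, so the $t=1$ instance of \eqref{prop3.2_eq1} is not even a statement about $A\sharp B$). This is why your constant $\tfrac12 k_1=\tfrac12\left(h+h^{-1}\right)$ comes out larger than $\sqrt{K(h)}$, and why you were driven to an unfinished fallback through Lemma \ref{7} and $A\nabla B$, which you do not carry out and which is not needed. The correct (and the paper's) specialization is $t=\tfrac12$, $\alpha=\tfrac{m}{M}$, $\beta=\tfrac{M}{m}$: the hypothesis $mI\le A,B\le MI$ gives $\tfrac{m}{M}A\le B\le\tfrac{M}{m}A$ and $\tfrac{m}{M}B\le A\le\tfrac{M}{m}B$, so both \eqref{prop3.2_eq1} and \eqref{prop3.2_eq2} apply, and
$k_{1/2}\left(\tfrac{m}{M},\tfrac{M}{m}\right)=f_{1/2}(h)=\sqrt{h}+\tfrac{1}{\sqrt{h}}=\tfrac{M+m}{\sqrt{Mm}}=2\sqrt{K(h)}$,
which upon dividing by $2$ yields exactly the stated upper bound $\sqrt{K(h)}\,A$, while the lower bound is the trivial $f_t\ge 2$. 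With that substitution your argument closes, with no appeal to Lemma \ref{7} at all.
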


\begin{proof}
Since $\dfrac{df_t(x)}{dx}=tx^{-t-1}\left(x^{2t}-1\right)$, the function $f_t(x)$ is monotone decreasing in $x\in (0,1)$ and monotone increasing in $x\in(1,\infty)$.
Thus we have $f_t(1)\le f_t(x)\le k_t\left(\alpha,\beta\right)=k_t\left(\dfrac{1}{\beta},\dfrac{1}{\alpha}\right)$, since $f_t(\alpha)=f_t(\alpha^{-1})$ and $f_t(\beta)=f_t(\beta^{-1})$. 
If we use the functional calculus in $f_t(1)\le f_t(x)\le  k_t\left(\alpha,\beta\right)$ with $x:=A^{-1/2}BA^{-1/2}$ we have the inequality \eqref{prop3.2_eq1}. Note that the condition $\dfrac{1}{\beta} B\le A \le \dfrac{1}{\alpha} B$ is equivalent to $\alpha I\le B^{1/2}A^{-1}B^{1/2} \le \beta I$. If we also use the functional calculus in $f_t(1)\le f_t(x)\le  k_t\left(\dfrac{1}{\beta},\dfrac{1}{\alpha}\right)$ with $x:=B^{1/2}A^{-1}B^{1/2}$ we have the inequality \eqref{prop3.2_eq2}.

In the inequalities \eqref{prop3.2_eq1} and \eqref{prop3.2_eq2}, we set $t:=\dfrac{1}{2}$, $\alpha:=\dfrac{m}{M}$ and $\beta:=\dfrac{M}{m}$.
Then $k_{1/2}\left(\dfrac{m}{M},\dfrac{M}{m}\right)=2\sqrt{K(h)}$ since $f_{1/2}\left(\dfrac{m}{M}\right)=f_{1/2}\left(\dfrac{M}{m}\right)=\dfrac{M+m}{\sqrt{Mm}}$. Thus we have the inequalities \eqref{prop3.2_eq3} from \eqref{prop3.2_eq1} and \eqref{prop3.2_eq2}.
\end{proof}

\section{An application: Quasi-Tsallis relative operator entropy}
The geometric mean defines the Tsallis relative operator entropy of two positive operators $A, B\in\mathcal{B}(\mathcal{H})$ as
$$
T_t(A|B):=\frac{A\sharp_tB-A}{t},\,\,(0<t\le 1).
$$
 We may define the quasi-Tsallis relative operator entropy as
$$
\hat{T}_t(A|B):=\frac{A\natural_tB-A}{t},\,\,(0<t\le 1)
$$
for $A,B>0$. Then, we have to define quasi-relative operator entropy by
$$
\hat{S}(A|B):=A^{1/2}\log\left(B^{1/2}A^{-1}B^{1/2}\right)A^{1/2},
$$
as $\lim\limits_{t\to 0}\hat{T}_t(A|B)=\hat{S}(A|B)$. From Proposition \ref{prop_list} (v), we have
$$
\hat{T}_t(A|B)=T_t\left(A \left| \vert A^{-\frac12}B^{\frac12}A^{\frac12}\vert^2 \right. \right)
=T_t\left( A\natural_0 B\left| A\natural_1 B\right. \right).
$$ 

\begin{proposition}
Let $A, B\in\mathcal{M}_n$ be positive definite matrices, and let  $0< t\le 1$. 
\begin{itemize}
\item[(i)] If $A\natural_1 B \le A\sharp_1 B$, then $\hat{T}_t(A|B)\le T_t(A|B)$.
\item[(ii)] If $A\natural_1 B \ge A\sharp_1 B$, then $\hat{T}_t(A|B)\ge T_t(A|B)$.
\end{itemize}
\end{proposition}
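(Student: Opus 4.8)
The plan is to reduce both entropies to the common form $\dfrac{A\sharp_t X-A}{t}$ and then use monotonicity of the weighted geometric mean in its second variable.

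First I would invoke Proposition \ref{prop_list} (v), which gives $A\natural_t B=(A\natural_0 B)\sharp_t(A\natural_1 B)$; since $A\natural_0 B=A$, this reads $A\natural_t B=A\sharp_t(A\natural_1 B)$. Hence
\[
\hat T_t(A|B)=\frac{A\natural_t B-A}{t}=\frac{A\sharp_t(A\natural_1 B)-A}{t},
\qquad
T_t(A|B)=\frac{A\sharp_t B-A}{t}=\frac{A\sharp_t(A\sharp_1 B)-A}{t},
\]
where we used $A\sharp_1 B=B$. Thus both quantities are obtained by applying the map $X\mapsto\dfrac{A\sharp_t X-A}{t}$ to $X=A\natural_1 B$ and $X=A\sharp_1 B$ respectively.

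Next I would record that $\sharp_t$ is monotone in its second argument: if $X\le Y$ then $A\sharp_t X\le A\sharp_t Y$. This is immediate from $A\sharp_t X=A^{1/2}(A^{-1/2}XA^{-1/2})^tA^{1/2}$ together with the L\"owner--Heinz inequality, since $s\mapsto s^t$ is operator monotone for $0\le t\le 1$. Assuming the hypothesis of (i), namely $A\natural_1 B\le A\sharp_1 B$, monotonicity yields $A\sharp_t(A\natural_1 B)\le A\sharp_t(A\sharp_1 B)=A\sharp_t B$; subtracting $A$ and dividing by the positive scalar $t$ preserves the order, so $\hat T_t(A|B)\le T_t(A|B)$. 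Part (ii) follows by reversing every inequality in this chain.

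I do not expect a genuine obstacle: the whole content sits in the identity $A\natural_t B=A\sharp_t(A\natural_1 B)$ supplied by Proposition \ref{prop_list} (v), after which the conclusion is just operator monotonicity of $s\mapsto s^t$ inside the definition of $\sharp_t$ followed by division by $t>0$. The only point deserving a word of care is precisely that last division, which is harmless since $t>0$.
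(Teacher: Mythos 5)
Your proof is correct and is essentially the paper's argument in a slightly repackaged form: the paper applies the L\"owner--Heinz inequality directly to $B^{\frac12}A^{-1}B^{\frac12}\le A^{-\frac12}BA^{-\frac12}$ and conjugates back by $A^{\frac12}$, while you route the same computation through the identity $A\natural_t B=A\sharp_t\left(A\natural_1 B\right)$ from Proposition \ref{prop_list} (v) together with monotonicity of $\sharp_t$ in its second variable, which is itself just L\"owner--Heinz. Both arguments propagate the $t=1$ ordering to all $0<t\le 1$ in the same way, so no gap.
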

\begin{proof}
Since $A\natural_1 B \le A\sharp_1 B\Longleftrightarrow A^{\frac12}B^{\frac12}A^{-1}B^{\frac12}A^{\frac12} \le B\Longleftrightarrow B^{\frac12}A^{-1}B^{\frac12} \le A^{-\frac12}BA^{-\frac12}$,
we have $\left(B^{\frac12}A^{-1}B^{\frac12}\right)^t \le \left(A^{-\frac12}BA^{-\frac12}\right)^t$ for $0\le t \le 1$ by L\"{o}wner--Heinz inequality. Multiplying $A^{\frac12}$ to both sides, we have $A\natural_t B\le A\sharp_t B$ which implies $\hat{T}_t(A|B)\le T_t(A|B)$. (ii) can be proven similarly.
\end{proof}
Remarkably, the ordering between $\natural_t$ and $\sharp_t$ for all $0\le t \le 1$ is determined by the ordering between $\natural_1$ and $\sharp_1$.

To state the following proposition, which gives simple bounds for $\hat{T}_t(A|B)$, we temporarily extend the range of the parameter $t$ in the definition of $\hat{T}_t(A|B)$ as $t\in \mathbb{R}$ with $t\neq 0$.
\begin{proposition}
Let $A, B\in\mathcal{M}_n$ be positive definite matrices, and let  $s,t\in \mathbb{R}$ with $s,t\neq 0$. If $s\le t$, then $\hat{T}_s(A|B)\leq \hat{T}_t(A|B)$.
\end{proposition}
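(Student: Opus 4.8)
The plan is to strip away the operation $\natural_t$ and reduce to a one--variable monotonicity statement. Writing $Y:=B^{1/2}A^{-1}B^{1/2}$, which is positive definite because $A,B$ are, the definition $A\natural_tB=A^{1/2}Y^tA^{1/2}$ together with $A=A^{1/2}IA^{1/2}$ gives at once
\[
\hat{T}_t(A|B)=\frac{A\natural_tB-A}{t}=A^{1/2}\!\left(\frac{Y^t-I}{t}\right)\!A^{1/2},\qquad t\in\mathbb{R}\setminus\{0\}.
\]
Since the congruence $X\mapsto A^{1/2}XA^{1/2}$ preserves the L\"owner order, it suffices to show that $g(t):=\dfrac{Y^t-I}{t}$, defined by functional calculus, is nondecreasing in $t$ on $\mathbb{R}\setminus\{0\}$.

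Next I would pass to the scalar level. By the spectral theorem, $Y=\sum_i\lambda_iP_i$ with $\lambda_i>0$ and mutually orthogonal projections $P_i$ summing to $I$, so $g(t)=\sum_i\frac{\lambda_i^t-1}{t}P_i$ is diagonal in this decomposition. Hence for $s\le t$ (both nonzero) the operator inequality $g(s)\le g(t)$ is equivalent to the family of scalar inequalities $\dfrac{\lambda^s-1}{s}\le\dfrac{\lambda^t-1}{t}$ ranging over the eigenvalues $\lambda>0$ of $Y$. Everything therefore reduces to: for each fixed $\lambda>0$, the function $p(u):=\dfrac{\lambda^u-1}{u}$, with removable value $p(0)=\log\lambda$, is nondecreasing on all of $\mathbb{R}$.

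To prove this last fact I would use the representation $p(u)=\displaystyle\int_0^1(\log\lambda)\,\lambda^{us}\,ds$, valid for every real $u$ (including $u=0$, where it reads $\log\lambda$). Differentiating under the integral sign gives
\[
p'(u)=\int_0^1 s\,(\log\lambda)^2\,\lambda^{us}\,ds\ \ge\ 0,
\]
so $p$ is nondecreasing (and constant precisely when $\lambda=1$). Inserting this back through the spectral decomposition of $Y$ yields $g(s)\le g(t)$, and conjugating by $A^{1/2}$ completes the argument. One may equally avoid the integral and differentiate $p$ directly; positivity of the resulting numerator reduces to the elementary inequality $e^{w}(w-1)+1\ge 0$ for $w\in\mathbb{R}$, which follows since the left side has its unique critical point, a global minimum, at $w=0$.

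I do not anticipate a genuine obstacle. The only points deserving care are that the formula for $\hat{T}_t$ uses invertibility of $B$ (hence of $Y$), which is granted by positive definiteness, and that the monotonicity must hold \emph{across} $u=0$ so that the mixed case $s<0<t$ is covered; the integral representation handles this automatically, being smooth in $u$ on all of $\mathbb{R}$. Alternatively one could derive the same conclusion from Proposition \ref{prop_list}(v) by recognizing $\hat{T}_t(A|B)$ as an ordinary Tsallis relative operator entropy and invoking the corresponding known monotonicity, but the direct route above is self-contained.
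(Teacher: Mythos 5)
Your proof is correct and follows essentially the same route as the paper: reduce to the scalar monotonicity of $u\mapsto\dfrac{x^u-1}{u}$, apply functional calculus with $x=B^{\frac12}A^{-1}B^{\frac12}$, and conjugate by $A^{\frac12}$. The only difference is cosmetic — you verify the scalar monotonicity via the integral representation (which also cleanly covers the case $s<0<t$ that the paper passes over silently), whereas the paper differentiates directly using $\log a\le a-1$, an argument you also note as an alternative.
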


\begin{proof}
From the inequality $\log a \le a-1$ for any $a>0$, we easily find 
$$
\frac{d}{dt}\left(\frac{x^t-1}{t}\right)=\frac{x^t\log x^t-(x^t-1)}{t^2}\ge 0,\quad (t\neq 0,\,\,x>0).
$$
Therefore if $s\le t$  with $s,t\neq 0$, then we have $\dfrac{x^s-1}{s}\le \dfrac{x^t-1}{t}$ for $x>0$.
Applying a standard functional calculus to this inequality with $x:=B^{\frac12}A^{-1}B^{\frac12}$ and multiplying $A^{\frac12}$ to both sides, we obtain the desired result.
\end{proof}

We have the following upper bound from Proposition \ref{9}.
\begin{proposition}
Let $A, B\in\mathcal{M}_n$ be positive definite matrices, and let  $0< t\le 1$. Then 
$$
\hat{T}_t(A|B) \le\left(\ln_t\kappa(A^{-1},B)\right)A,
$$
where $\ln_tx:=\dfrac{x^t-1}{t}$ is $t$-logarithmic function defined for $x>0$ and $0<t \le 1$.
\end{proposition}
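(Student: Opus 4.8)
The plan is to reduce the statement to the operator bound already proved in Proposition \ref{9}, using the representation of $A\natural_t B$ as a weighted geometric mean from Proposition \ref{prop_list}(v).

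First I would fix notation: set $C:=\left|A^{-\frac12}B^{\frac12}A^{\frac12}\right|^2=A^{\frac12}B^{\frac12}A^{-1}B^{\frac12}A^{\frac12}$ and $\kappa:=\kappa(A^{-1},B)=\|A^{-1}\|\,\|B\|$. By Proposition \ref{prop_list}(v) we have $A\natural_t B=A\sharp_t C$, and by (the first inequality in) Proposition \ref{9} we have $C\le \kappa A$. The task is therefore to turn $C\le\kappa A$ into $A\sharp_t C\le \kappa^t A$.

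This is the one step requiring a tiny bit of care, and it is where L\"owner--Heinz enters. Congruence by $A^{-1/2}$ converts $C\le\kappa A$ into $A^{-1/2}CA^{-1/2}\le\kappa I$; since $0<t\le 1$, operator monotonicity of $x\mapsto x^t$ gives $\left(A^{-1/2}CA^{-1/2}\right)^t\le\kappa^t I$; congruence by $A^{1/2}$ then yields $A\sharp_t C=A^{1/2}\left(A^{-1/2}CA^{-1/2}\right)^tA^{1/2}\le\kappa^t A$. (Equivalently, one can observe directly that $A\sharp_t(\kappa A)=\kappa^t A$ because $A$ and $\kappa A$ commute, and invoke monotonicity of $\sharp_t$ in its second argument.) Hence $A\natural_t B\le\kappa^t A$.

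Finally, subtract $A$ and divide by $t>0$:
$$\hat{T}_t(A|B)=\frac{A\natural_t B-A}{t}\le\frac{\kappa^t A-A}{t}=\frac{\kappa^t-1}{t}\,A=\bigl(\ln_t\kappa(A^{-1},B)\bigr)A,$$
which is the assertion. I do not expect any genuine obstacle: the only substantive point is the passage from $C\le\kappa A$ to $A\sharp_t C\le\kappa^t A$ via L\"owner--Heinz, and the rest is immediate from the definition of $\hat{T}_t$ and of $\ln_t$.
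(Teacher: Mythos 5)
Your proof is correct and follows essentially the same route as the paper: both reduce Proposition \ref{9}'s bound $\left|A^{-\frac12}B^{\frac12}A^{\frac12}\right|^2\le\kappa(A^{-1},B)A$ by congruence to $B^{\frac12}A^{-1}B^{\frac12}\le\kappa(A^{-1},B)I$, apply L\"owner--Heinz (the paper takes the $t$-th power directly, which here only needs the scalar spectral bound), and then recover $A\natural_t B\le\kappa^t(A^{-1},B)A$ before subtracting $A$ and dividing by $t$. Your detour through $A\natural_t B=A\sharp_t C$ from Proposition \ref{prop_list}(v) is a purely cosmetic difference.
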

\begin{proof}
From the first inequality in  Proposition \ref{9}, we have $B^{\frac12}A^{-1}B^{\frac12} \le \kappa(A^{-1},B) I$. For $0<t\le 1$, we have $\left(B^{\frac12}A^{-1}B^{\frac12}\right)^t \le \kappa^t(A^{-1},B) I$ which implies the result.
\end{proof}

We finally give the following result.
\begin{theorem}
Let $A, B\in\mathcal{M}_n$ be positive definite matrices, and let  $0\le t\le 1$. Then we have
\begin{equation}\label{eq_theorem4.1}
\hat{T}_{\frac12}(A|B)\le t\, \hat{T}_t(A|B) +(1-t)\hat{T}_{1-t}(A|B)\le \hat{T}_{1}(A|B).
\end{equation}
\end{theorem}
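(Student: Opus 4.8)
The plan is to collapse the operator inequality \eqref{eq_theorem4.1} to an elementary scalar inequality by means of the functional calculus, exactly as in the proofs of the preceding propositions. Put $Y:=B^{\frac12}A^{-1}B^{\frac12}$, which is positive definite since $A,B>O$. Because $A\natural_t B=A^{\frac12}Y^tA^{\frac12}$ and $A=A^{\frac12}IA^{\frac12}$, we get
\[
\hat T_t(A|B)=\frac{A\natural_tB-A}{t}=A^{\frac12}\,\frac{Y^t-I}{t}\,A^{\frac12}=A^{\frac12}\,(\ln_t Y)\,A^{\frac12},
\]
with $\ln_t x=\dfrac{x^t-1}{t}$ the $t$-logarithm used earlier. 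The key simplification is that $t\ln_t x=x^t-1$, so the weighted combination telescopes:
\[
t\,\hat T_t(A|B)+(1-t)\,\hat T_{1-t}(A|B)=A^{\frac12}\big(Y^t-I+Y^{1-t}-I\big)A^{\frac12}=A^{\frac12}\big(Y^t+Y^{1-t}-2I\big)A^{\frac12},
\]
while $\hat T_{\frac12}(A|B)=A^{\frac12}\,2(Y^{\frac12}-I)\,A^{\frac12}$ and $\hat T_1(A|B)=A^{\frac12}(Y-I)A^{\frac12}$.

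Next I would conjugate by the invertible positive operator $A^{-\frac12}$ (which preserves and reflects the operator order), reducing \eqref{eq_theorem4.1} to the equivalent operator inequality $2(Y^{\frac12}-I)\le Y^t+Y^{1-t}-2I\le Y-I$. Since $Y>O$, the functional calculus reduces this further to checking, for every $y$ in the spectrum of $Y$ — hence for every $y>0$ and every $t\in[0,1]$ — the numerical inequality $2\sqrt y-2\le y^t+y^{1-t}-2\le y-1$, i.e.\ $2\sqrt y\le y^t+y^{1-t}\le y+1$.

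Finally I would verify these two scalar inequalities directly. For the left one, $y^t+y^{1-t}-2\sqrt y=\big(y^{t/2}-y^{(1-t)/2}\big)^2\ge 0$. For the right one, using $y=y^t y^{1-t}$ one has $y+1-y^t-y^{1-t}=(y^t-1)(y^{1-t}-1)\ge 0$, because the factors $y^t-1$ and $y^{1-t}-1$ have the same sign (both nonnegative when $y\ge 1$, both nonpositive when $0<y\le 1$). Assembling these gives the claimed operator inequality. There is no genuine obstacle here: the only thing to get right is the clean bookkeeping that turns the weighted sum into $A^{\frac12}(Y^t+Y^{1-t}-2I)A^{\frac12}$; one could alternatively phrase the scalar step as convexity of $t\mapsto y^t+y^{1-t}$ on $[0,1]$ together with its symmetry about $t=\tfrac12$, but the two-line factorizations above are the shortest route.
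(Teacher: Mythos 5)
Your proof is correct, and it takes a genuinely different (and more elementary) route than the paper. You exploit the fact that every term in play has the form $A^{\frac12}f(Y)A^{\frac12}$ with the single operator $Y=B^{\frac12}A^{-1}B^{\frac12}$, so after congruence by $A^{-\frac12}$ the whole statement collapses, via the functional calculus, to the scalar inequalities $2\sqrt{y}\le y^{t}+y^{1-t}\le 1+y$, which you settle by the factorizations $\bigl(y^{t/2}-y^{(1-t)/2}\bigr)^2\ge 0$ and $(y^{t}-1)(y^{1-t}-1)\ge 0$. The paper instead first rewrites the inequalities as $A\natural B\le \tfrac12\left(A\natural_tB+A\natural_{1-t}B\right)\le \tfrac12\left(A\natural_0B+A\natural_1B\right)$, expresses the middle term as $\left(A^{\frac12}B^{\frac12}\right)Hz_t\left(A^{-1},B^{-1}\right)\left(A^{\frac12}B^{\frac12}\right)^*$ with the Heinz mean $Hz_t$, and invokes the known operator interpolation $A\sharp B\le Hz_t(A,B)\le A\nabla B$ for the (noncommuting) pair $(A^{-1},B^{-1})$, then identifies the two conjugated endpoints as $A\natural B$ and $\tfrac12\left(A\natural_0B+A\natural_1B\right)$; this has the side benefit of making the Heinz-mean interpretation explicit (recorded in the paper's closing remark), whereas your argument gets the same pinching for free because all your operators are functions of one $Y$ and hence commute. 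Two minor bookkeeping points: your identity $t\,\hat T_t+(1-t)\,\hat T_{1-t}=A^{\frac12}\bigl(Y^{t}+Y^{1-t}-2I\bigr)A^{\frac12}$ requires reading $t\,\hat T_t(A|B)$ as $A\natural_tB-A$ at the endpoint $t=0$ (the paper treats $t\in\{0,1\}$ separately for the same reason), and it is worth stating explicitly that congruence by the invertible $A^{\pm\frac12}$ both preserves and reflects the operator order, which you do. No gap otherwise.
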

\begin{proof}
The inequalities \eqref{eq_theorem4.1} are equivalent to the inequalities
\begin{equation}\label{eq01_theorem4.1}
A\natural B\le \frac{A\natural_t B+A\natural_{1-t}B}{2} \le \frac{A\natural_0B+A\natural_1B}{2}.
\end{equation}
For the special cases $t=0, 1$, the above inequalities hold since $\hat{T}_{\frac12}(A|B)\le\hat{T}_1(A|B) \Longleftrightarrow A\natural B \le \dfrac{A\natural_0B+A\natural_1A}{2}$ holds by the arithmetic--geometric mean inequality with Proposition \ref{prop_list} (v). So, we assume $0<t<1$ in the sequel.
Note that we can write
$$
\frac{A\natural_tB+A\natural_{1-t}B}{2}=\left(A^{\frac12}B^{\frac12}\right)Hz_t\left(A^{-1},B^{-1}\right)\left(A^{\frac12}B^{\frac12}\right)^*,
$$
where the Heinz mean is defined as $Hz_t(A,B):=\dfrac{A\sharp_tB+A\sharp_{1-t}B}{2}$.
Since it is known that $A\sharp B\le Hz_t(A,B)\le \dfrac{A+B}{2}$, we have
$$
\left(A^{\frac12}B^{\frac12}\right)\left(A^{-1}\sharp B^{-1}\right)\left(A^{\frac12}B^{\frac12}\right)^*
\le \frac{A\natural_tB+A\natural_{1-t}B}{2} \le \left(A^{\frac12}B^{\frac12}\right)\left(\frac{A^{-1}+B^{-1}}{2}\right)\left(A^{\frac12}B^{\frac12}\right)^*.
$$
Here, we can calculate
$$
\left(A^{\frac12}B^{\frac12}\right)\left(A^{-1}\sharp B^{-1}\right)\left(A^{\frac12}B^{\frac12}\right)^*
=\left(A^{\frac12}B^{\frac12}\right)\left(B^{-1}\sharp A^{-1}\right)\left(A^{\frac12}B^{\frac12}\right)^*
=A\natural B
$$
and
$$
\left(A^{\frac12}B^{\frac12}\right)\left(\frac{A^{-1}+B^{-1}}{2}\right)\left(A^{\frac12}B^{\frac12}\right)^*=\frac{\vert A^{-\frac12}B^{\frac12}A^{\frac12}\vert^2+A}{2}=\frac{A\natural_0B+A\natural_1B}{2}.
$$
From the calculations above, we have the inequalities \eqref{eq01_theorem4.1} so that we have the inequalities \eqref{eq_theorem4.1}.
\end{proof}

\begin{remark}
In the process of the proof, we find that
$$
\left(A\natural_0B\right) \sharp \left(A\natural_1B\right)=A\natural B 
\le \frac{A\natural_t B+A\natural_{1-t}B}{2} 
\le \frac{A+\vert A^{-\frac12}B^{\frac12}A^{\frac12}\vert^2}{2}=\left(A\natural_0B\right)\nabla \left(A\natural_1B\right).
$$
The first equality above is the particular case of Proposition \ref{prop_list} (v) and (vii).
\end{remark}

\section*{Acknowledgements}
The authors would like to thank the referees for their careful and insightful comments to improve our manuscript.

\subsection*{Declarations}
\begin{itemize}
\item {\bf{Availability of data and materials}}: Not applicable.
\item {\bf{Competing interests}}: The authors declare that they have no competing interests.
\item {\bf{Funding}}: This research is supported by a grant (JSPS KAKENHI, Grant Number: 21K03341) awarded to the author, S. Furuichi.
\item {\bf{Authors' contributions}}: Authors declare that they have contributed equally to this paper. All authors have read and approved this version.
\end{itemize}

\vskip 0.3 true cm

\noindent{\tiny (S. Furuichi) Department of Information Science, College of Humanities and Sciences, Nihon University, Setagaya-ku, Tokyo, Japan}

{\tiny \textit{E-mail address:} furuichi.shigeru@nihon-u.ac.jp}

\vskip 0.3 true cm 

\noindent{\tiny (H. R. Moradi) Department of Mathematics, Mashhad Branch, Islamic Azad University, Mashhad, Iran
	
\noindent	\textit{E-mail address:} hrmoradi@mshdiau.ac.ir}

\vskip 0.3 true cm

\noindent{\tiny (C. Conde)  Instituto de Ciencias, Universidad Nacional de General Sarmiento  and  Consejo Nacional de Investigaciones Cient\'ificas y Tecnicas, Argentina}

\noindent{\tiny \textit{E-mail address:} cconde@campus.ungs.edu.ar}

\vskip 0.3 true cm 	

\noindent{\tiny (M. Sababheh) Department of basic sciences, Princess Sumaya University for Technology, Amman, Jordan}
	
\noindent	{\tiny\textit{E-mail address:} sababheh@psut.edu.jo; sababheh@yahoo.com}

\end{document}